\documentclass[11pt,reqno]{amsart}
\usepackage{geometry}                
\geometry{letterpaper}

\usepackage[svgnames]{xcolor}
\usepackage{tikz}
\usetikzlibrary{decorations.markings}
\usetikzlibrary{shapes.geometric}
\pagestyle{empty}
\usepackage{mathrsfs}
\usepackage{amssymb}
\usepackage{amsfonts}
\usepackage{algorithm}
\usepackage{amsthm}
\usepackage{amsmath, multicol}
\pgfdeclarelayer{edgelayer} \pgfdeclarelayer{nodelayer}
\pgfsetlayers{edgelayer,nodelayer,main}
\usepackage{enumitem}
\tikzstyle{none}=[inner sep=0pt]
\definecolor{hexcolor0xffffff}{rgb}{1.000,1.000,1.000}
\definecolor{hexcolor0x000000}{rgb}{0.000,0.000,0.000}
\definecolor{hexcolor0x00ff00}{rgb}{0.000,1.000,0.000}
\definecolor{hexcolor0xffff00}{rgb}{1.000,1.000,0.000}
\definecolor{hexcolor0x000000}{rgb}{0.000,0.000,0.000}
\definecolor{hexcolor0x000000}{rgb}{0.000,0.000,0.000}
\definecolor{hexcolor0x0000ff}{rgb}{0.000,0.000,1.000}
\definecolor{hexcolor0x000000}{rgb}{0.000,0.000,0.000}
\definecolor{hexcolor0xdc143c}{rgb}{0.863,0.078,0.235}
\definecolor{hexcolor0x32cd32}{rgb}{0.196,0.804,0.196}

\tikzstyle{rn}=[circle,fill=hexcolor0xffffff,draw=hexcolor0x000000,line width=0.8 pt]
\tikzstyle{gn}=[circle,fill=hexcolor0x00ff00,draw=hexcolor0x000000,line width=0.8 pt]
\tikzstyle{yn}=[circle,fill=hexcolor0xffff00,draw=hexcolor0x000000,line width=0.8 pt]

\tikzstyle{simple}=[-,draw=hexcolor0x000000,line width=2.000]
\tikzstyle{blue}=[-,draw=hexcolor0x0000ff,postaction={decorate},decoration={markings,mark=at
position .5 with {\arrow{>}}},line width=2.000]
\tikzstyle{tick}=[-,draw=hexcolor0x000000,postaction={decorate},decoration={markings,mark=at
position .5 with {\draw (0,-0.1) -- (0,0.1);}},line width=2.000]
\tikzstyle{black}=[->,draw=hexcolor0x000000,line width=2.000]\tikzstyle{red}=[-,draw=hexcolor0xdc143c,postaction={decorate},decoration={markings,mark=at
position .5 with {\arrow{>}}},line width=2.000]
\tikzstyle{green}=[-,draw=hexcolor0x32cd32,postaction={decorate},decoration={markings,mark=at
position .5 with {\arrow{>}}},line width=2.000]
\tikzstyle{match}=[-,draw=hexcolor0xdc143c]

\newtheorem{Theorem}{Theorem}[section]
\newtheorem{Lemma}[Theorem]{Lemma}

\newtheorem{Prop}[Theorem]{Proposition}

\newcommand{\QED}{\ \hfill \rule{0.5em}{0.5em} }
\newcommand{\omcos}[1]{$(H#1)^\omega$}

\begin{document}

\title{Graph immersions, inverse monoids and deck transformations}

\author{Corbin Groothuis and John Meakin}

\maketitle

\begin{abstract}


If $f : \tilde{\Gamma} \rightarrow \Gamma$ is a  covering  map
between connected graphs, and $H$ is the subgroup of
$\pi_1(\Gamma,v)$ used to construct the cover, then it is well known
that the group  of deck transformations of the cover is isomorphic
to $ N(H)/H$, where $N(H)$ is the normalizer of $H$ in
$\pi_1(\Gamma,v)$. We show that an entirely analogous result holds
for  immersions between connected graphs, where the subgroup $H$ is
replaced by the closed inverse submonoid of the inverse monoid
$L(\Gamma,v)$ used to construct the immersion. We observe a
relationship between group actions on graphs and deck
transformations of graph immersions. We also show that a graph
immersion $f : \tilde{\Gamma} \rightarrow \Gamma$ may be extended to
a cover $g : \tilde{\Delta} \rightarrow \Gamma$ in such a way that
all deck transformations of $f$ are restrictions of deck
transformations of $g$.

\medskip

\noindent AMS subject numbers: 20M18, 57M10

\noindent Key words and phrases: graph immersion,  deck
transformation, free inverse monoid

\end{abstract}

\section
{Introduction}

It is well known that group theory provides a powerful algebraic
tool for studying covering spaces of topological spaces. For
example, under mild conditions on a connected topological space
$\mathcal X$, the connected covers of $\mathcal X$ may be classified
via subgroups of the fundamental group of $\mathcal X$. This may be
used to study deck transformations of covering spaces and actions of
groups on topological spaces. However, the study of {\em immersions}
between connected topological spaces seems to require somewhat
different algebraic tools, even for graphs (1-dimensional
$CW$-complexes).

In his paper \cite{Stall}, Stallings made use of immersions between
finite graphs to study finitely generated subgroups of free groups.
Here by an immersion between graphs we mean a locally injective
graph morphism, that is, a graph morphism that is injective on star
sets. Subsequently, Margolis and Meakin \cite{MM1} showed how the
theory of {\em inverse monoids} may be used to classify immersions
between connected graphs. These results have been extended by Meakin
and Szak\'acs \cite{MeSz}, \cite{MeSz2} to classify immersions
between higher dimensional cell complexes.

In the present paper we extend the ideas of \cite{MM1} to show how
inverse monoids may be used to study deck transformations of
immersions between connected graphs. By a deck transformation we
mean a graph automorphism that respects the immersion.

In Section 2 of the paper we introduce the  terminology needed to
describe covers and immersions of graphs. We then summarize some of
the classical algebraic and topological ideas involving the
classification of covers and the theory of deck transformations of
connected covers of graphs.

Section 3 summarizes  some of the basic theory of inverse monoids
that will be needed subsequently. We then describe the use of closed
inverse submonoids of free inverse monoids to classify immersions
between connected graphs. We prove an apparently new result
constructing the group of right $\omega$-cosets of a closed inverse
submonoid of an inverse monoid in its normalizer.

In Section 4 we provide a calculation of the group of deck
transformations of a connected immersion between graphs. If $H$ is
the closed inverse submonoid of the free inverse monoid that is used
to construct the immersion, then the group of deck transformations
of the immersion is the group of right $\omega$-cosets of $H$ in its
normalizer (Theorem \ref{deck}).

In Section 5 we describe how the results of earlier sections of the
paper specialize in the case that the graph immersion is actually a
cover of connected graphs. In Section 6 we make an
 observation relating  graph immersions to actions of groups on graphs.
In Section 7 we prove that an immersion between graphs may be
extended to a covering map between graphs in such a  way that deck
transformations of the immersion are restrictions of deck
transformations of the cover.

\section
{Covers and immersions of graphs}

By a {\it graph} ${\Gamma} = (\Gamma^0,\Gamma^1)$ we  mean a graph
in the sense of Serre \cite{Serre}. Here $\Gamma^0$ is the set of
vertices and $\Gamma^1$ is the set of edges of $\Gamma$. Thus every
directed edge $e : v \rightarrow w$ comes equipped with an inverse
edge $e^{-1} : w \rightarrow v$ such that $(e^{-1})^{-1} = e$ and
$e^{-1} \neq e$. The initial vertex of $e$ is denoted by
${\alpha}(e)$ and the terminal vertex of $e$ is denoted by
${\omega}(e)$: thus $\alpha(e^{-1}) = \omega(e)$ and $\omega(e^{-1})
= \alpha(e)$. For each edge $e$ we designate one of the edges  in
the set $\{e,e^{-1}\}$ as being {\em positively oriented}, and its
inverse edge  as being {\em negatively oriented}. We normally only
indicate the positively oriented edges in a  sketch of a graph. A
{\em path} in the graph $\Gamma$ is a finite string $p =
e_1e_2...e_n$ where $\omega(e_i) = \alpha(e_{i+1})$ for $i =
1,...,n-1$: here the edges $e_i$ may be either positively or
negatively oriented. We denote the initial vertex of $p$ by
$\alpha(p)$ and the terminal vertex by $\omega(p)$: that is, if $p =
e_1e_2...e_n$, then $\alpha(p) = \alpha(e_1)$ and $\omega(p) =
\omega(e_n)$.  The {\em inverse} of the path $p = e_1e_2...e_n$ is
the path $p^{-1} = e_n^{-1}...e_2^{-1}e_1^{-1}$. The path $p$ is a
{\em circuit} if $\alpha(p) = \omega(p)$. A {\em tree} is a
connected graph in which every circuit $e_1e_2..e_n$ contains a
subpath of the form $ee^{-1}$ for some edge $e$. Thus the Cayley
graph $\Gamma(X)$ of the free group $FG(X)$ with respect to a set
$X$ of free generators is a tree.

The {\em free category} on a graph $\Gamma$ is the category
$FC(\Gamma)$ whose objects are the vertices of $\Gamma$ and whose
morphisms are the paths in $\Gamma$. The product $p.q$ of paths $p$
and $q$ is defined in $FC(\Gamma)$ if and only if $\omega(p) =
\alpha(q)$ and in that case $p.q = pq$, the concatenation of the
path $p$ followed by the path $q$. We say that a path $p_1$ is an
{\em initial segment} of a path $p$ if there is a path $p_2$ such
that $p = p_1p_2$ and in this case $p_2$ is a {\em terminal segment}
of $p$.

A {\em morphism} from the graph $\Gamma$ to the graph $\Gamma'$  is
a pair of functions $f : \Gamma \rightarrow \Gamma'$ that takes
vertices to vertices and edges to edges, and preserves incidence and
orientation of edges. (Here we abuse notation slightly by using the
same symbol $f$ to denote the corresponding function that takes
vertices to vertices and the function that takes edges to edges.) If
$v \in V({\Gamma})$, let $star({\Gamma},v) = \{e \in E({\Gamma}) :
{\alpha}(e) = v\}$. A morphism $f : {\Gamma} \rightarrow {\Gamma}'$
induces a map $f_{v} : star({\Gamma},v) \rightarrow
star({\Gamma}',f(v))$ between star sets in the obvious way.
Following Stallings \cite{Stall}, we say that a graph morphism $f$
is a {\it cover} if each $f_{v}$ is a bijection and that $f$ is an
{\it immersion} if each $f_{v}$ is an injection.

In his paper \cite{Stall}, Stallings made use of immersions between
graphs to study subgroups of free groups. Subsequently, Stallings
foldings and Stallings graphs have been used extensively to study
subgroups of free groups. See for example the survey paper by
Kapovich and Myasnikov \cite{KM} or the paper by Birget, Margolis,
Meakin and Weil \cite{BMMW} for just some of the relevant
literature.

It is clear from the definition of a graph that it is possible to
label the edges of a graph with labels of positively oriented edges
coming from some set $X $ so that no two positively oriented edges
with the same initial or terminal vertex are assigned the same
label. The labeling of positively oriented edges may be extended to
a labeling of all edges in the graph in such a way that if $e$ is a
positively oriented edge labeled by $x \in X$ then $e^{-1}$ is
labeled by $x^{-1} \in X^{-1}$, where $X^{-1}$ is a set disjoint
from $X$ and in one-one correspondence with $X$ via the map $x
\rightarrow x^{-1}$. We denote the label on an edge $e$ by
$\ell(e)$. Thus $\ell(e) \in X$ if $e$ is a positively oriented edge
and in general $\ell(e) \in X \cup X^{-1}$. The labeling of edges in
$\Gamma$ extends to a labeling of paths in $\Gamma$ in the obvious
way via $\ell(pq) = \ell(p)\ell(q)$ if $pq$ is defined.
  Thus if $p$ is a path in $\Gamma$ then
$\ell(p) \in (X \cup X^{-1})^*$.

For example, the Cayley graph ${\Gamma}(G,X)$ of a group $G$
relative to a set $X$ of generators is obviously labeled over $X
\cup X^{-1}$: its vertices are the elements of $G$ and there is a
directed edge labeled by $x$ from $g$ to $gx$ for each $g \in G$ and
each $x \in X \cup X^{-1}$. The bouquet of circles $B_X$ has one
vertex and one positively oriented edge labeled by $x$ for each $x
\in X$: of course $B_X$ also has negatively oriented edges labeled
by elements $x^{-1} \in X^{-1}$ for each $x \in X$.

If $\Gamma$ is labeled over $X \cup X^{-1}$ as above, then the
associated natural map $f_{\Gamma}$ from ${\Gamma}$ to $ B_{X}$ that
preserves edge labeling is a graph immersion. If $f : \Gamma
\rightarrow \Gamma'$ is a graph immersion and the edges of $\Gamma'$
are labeled over $X \cup X^{-1}$ as above, then this labeling
induces a labeling of the edges of $\Gamma$ in a natural way so that
$f$ preserves edge labeling and
 $ f_{\Gamma'} \circ f = f_{\Gamma}$.

 While the essential results in this
paper may be formulated without resort to labeling the edges of our
graphs, it is more convenient to do so. Hence we adopt the
convention that {\it all graphs that we will consider in this paper
will be edge labeled as described above and all immersions will
preserve edge labeling}.

A simple example illustrating these ideas is provided in Figure 1:
the natural map from the graph ${\Gamma}_{1}$ to $B_{\{a,b\}}$ is a
cover, while the natural map from ${\Gamma}_{2}$ to $B_{\{a,b\}}$ is
an immersion that is not a cover.

\bigskip

\centerline{\includegraphics[height=2.0in]{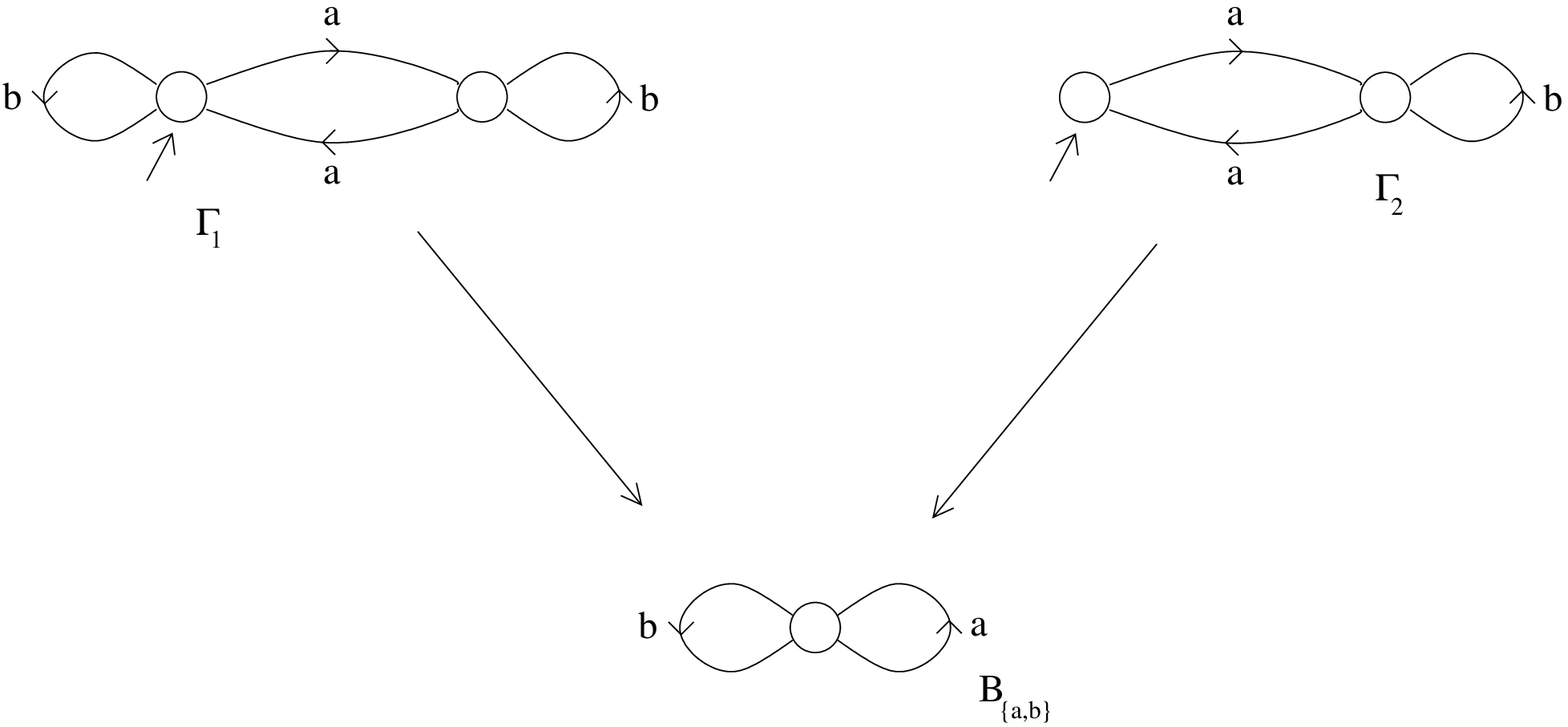} }

\begin{center}

Figure 1

\end{center}


We now list some  straightforward propositions that will be used in
the sequel.

\begin{Prop}
\label{uniquemorph}

An immersion $f: \Gamma \rightarrow \Gamma'$ between connected
edge-labeled graphs is uniquely determined by the image $f(v)$ of
any vertex $v$ in $\Gamma$. More precisely, if $v$ is a vertex of
$\Gamma$ and $v'$ is a vertex of $\Gamma'$, then there is at most
one graph immersion $f : \Gamma \rightarrow \Gamma'$ such that $f(v)
= v'$. Such an immersion exists if and only if, for every path
 $p$ in $\Gamma$ with $\alpha(p) = v$, there is a
path  $p'$  in $\Gamma'$ with $\alpha(p') = v' $ and $\ell(p') =
\ell(p)$ and  such that $p'$ is a  circuit if  $p$ is a circuit.

\end{Prop}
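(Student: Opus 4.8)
The plan is to exploit the determinism built into the edge-labeling convention. The key preliminary observation is that, since positively oriented edges sharing an initial or terminal vertex receive distinct labels, the full star set $star(\Gamma, u)$ at any vertex $u$ consists of edges with pairwise distinct labels in $X \cup X^{-1}$: two positive edges are separated by the convention, a positive and a negative edge carry labels in $X$ versus $X^{-1}$, and two negative edges at $u$ are the inverses of two positive edges sharing the terminal vertex $u$. Consequently, given a vertex $u$ (of $\Gamma$ or of $\Gamma'$) and a word $w \in (X \cup X^{-1})^*$, there is \emph{at most one} path $p$ with $\alpha(p) = u$ and $\ell(p) = w$: reading $w$ one letter at a time, each letter determines the next edge uniquely when it exists. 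I shall call this \emph{unique lifting of labels}.

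For the uniqueness assertion I would show that $f(v) = v'$ forces every value of $f$. Since $f$ preserves labels, for any path $p = e_1 \cdots e_n$ with $\alpha(p) = v$ the image $f(p)$ is a path at $v'$ with $\ell(f(p)) = \ell(p)$; by unique lifting in $\Gamma'$ this path is completely determined by $\ell(p)$ and $v'$. As $\Gamma$ is connected, every vertex is $\omega(p)$ and every edge lies on some such $p$, so $f$ is determined throughout, proving the ``at most one'' claim. Necessity in the existence statement is then immediate: if $f$ exists, then for any path $p$ at $v$ the path $p' := f(p)$ starts at $v'$, satisfies $\ell(p') = \ell(p)$, and is a circuit whenever $p$ is, because $f(\alpha(p)) = f(\omega(p))$.

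For sufficiency, which I expect to be the main obstacle, I would construct $f$ and verify it is a well-defined immersion. Given a vertex $w$, choose a path $p$ from $v$ to $w$; by hypothesis the word $\ell(p)$ lifts to a (necessarily unique) path $p'$ at $v'$, and I set $f(w) := \omega(p')$. The crux is independence of the choice of $p$. If $q$ is a second path from $v$ to $w$, then $c := p q^{-1}$ is a circuit at $v$, so by hypothesis $\ell(c)$ lifts to a \emph{circuit} $c'$ at $v'$. By unique lifting, $c'$ factors as $p' r'$, where $p'$ lifts $\ell(p)$ and $r'$ lifts $\ell(q)^{-1}$ from $\omega(p')$; since $c'$ returns to $v'$, the path $(r')^{-1}$ lifts $\ell(q)$ from $v'$ and hence equals $q'$, giving $\omega(q') = \alpha(r') = \omega(p')$. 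Thus $f(w)$ is well defined.

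To finish, I would define $f$ on an edge $e$ from $w_1$ to $w_2$ to be the unique edge of $\Gamma'$ starting at $f(w_1)$ with label $\ell(e)$; this edge exists because, for a path $p$ from $v$ to $w_1$, the word $\ell(pe)$ lifts by hypothesis, and the endpoint of that lift is exactly $f(w_2)$ by construction. Incidence and orientation are then automatic (applying uniqueness to $e^{-1}$ shows $f(e^{-1}) = f(e)^{-1}$), so $f$ is a label-preserving graph morphism with $f(v) = v'$. Finally, $f$ is an immersion: if $e_1, e_2 \in star(\Gamma, w)$ satisfy $f(e_1) = f(e_2)$, then $\ell(e_1) = \ell(e_2)$, and since distinct edges of $star(\Gamma, w)$ carry distinct labels we get $e_1 = e_2$, so each $f_w$ is injective.
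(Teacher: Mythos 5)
Your proposal is correct and follows essentially the same route as the paper's proof: uniqueness and necessity from label-preservation plus the determinism of labeled paths, and sufficiency by defining $f(w) = \omega(p')$ and using the circuit $pq^{-1}$ together with unique lifting of labels to establish well-definedness. The only difference is cosmetic — you isolate the ``unique lifting of labels'' fact and the star-injectivity check as explicit steps, which the paper treats as immediate consequences of the labeling convention.
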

\begin{proof} Suppose that there is an immersion $f$  from
$\Gamma$ to $\Gamma'$ such that $f(v) = v'$. Let $v_1$ be any vertex
of $\Gamma$ and $p$ any path in $\Gamma$ from $v$ to $v_1$. Since
$f$ preserves edge labels, it maps paths in $\Gamma$ to paths in
$\Gamma'$ with the same label, so there must be a  path $p'$ in
$\Gamma'$ with $\alpha(p') = v'$ and $\ell(p') = \ell(p)$.
Furthermore, the path $p'$ is unique since edge labeling is
consistent with an immersion into $B_X$. It follows that we must
have $f(v_1) = \omega(p')$. If $p$ is a circuit, then $\omega(p) =
v$ so $\omega(p') = f(\omega(p)) = f(v) = v'$, and so $p'$ is a
circuit. Also, if $e$ is an edge in $\Gamma$ with $\alpha(e) = v_1$,
then we must  have an edge $e'$ in $\Gamma'$ with $\alpha(e') =
\omega(p')$ and $f(e) = e'$. The uniqueness of such an edge follows
since $\ell(e') = \ell(e)$.   So if there is an immersion from
$\Gamma$ to $\Gamma'$ that maps $v$ to $v'$, there is only one such
immersion.

Suppose conversely that for every path
 $p$ in $\Gamma$ with $\alpha(p) = v$, there is a
path  $p'$  in $\Gamma'$ with $\alpha(p') = v' $ and $\ell(p') =
\ell(p)$ and  such that $p'$ is a  circuit if  $p$ is a circuit. Let
$v_1$ be a vertex in $\Gamma$ and $p$ a path from $v$ to $v_1$ in
$\Gamma$. Then there is a (necessarily unique) path $p'$ in
$\Gamma'$ with $\alpha(p') = v'$ and $\ell(p') = \ell(p)$. Define
$f(v_1) = \omega(p')$. If $p_1$ is another path in $\Gamma$ from $v$
to $v_1$, then as above, there is a unique path $p_1'$ in $\Gamma'$
with $\alpha(p_1') = v'$ and $\ell(p_1') = \ell(p_1)$. Since
$p_1p^{-1}$ is a circuit in $\Gamma$ from $v$ to $v$ it follows by
hypothesis that there is a circuit $q'$ in $\Gamma'$ at $v'$ with
$\ell(q') = \ell(p_1p^{-1}) = \ell(p_1)\ell(p^{-1}) =
\ell(p_1')\ell(p'^{-1}) $. So $p_1'$ is an initial segment of $q'$
and $p'^{-1}$ is a terminal segment of $q'$, and hence $p_1'p'^{-1}$
is a circuit in $\Gamma'$ from $v'$ to $v'$ with $\ell(p_1'p'^{-1})
= \ell(q')$. It follows that $\omega(p_1') = \omega(p')$, so $f$ is
well defined on vertices. If $e$ is an edge in $\Gamma$ with
$\alpha(e) = v_1$, then $pe$ is a path in $\Gamma$ starting at $v$
so there is some path $s'$ in $\Gamma'$ starting at $v'$ with
$\ell(s') = \ell(pe) = \ell(p)\ell(e) = \ell(p')\ell(e)$. By
uniqueness of a path starting at $v'$ with this label and the fact
that $p'$ is a path starting at $v'$ with $\ell(p') = \ell(p)$, it
follows that there must be a (unique) edge $e'$ in $\Gamma'$ with
$\alpha(e') = \omega(p')$ and $\ell(e') = \ell(e)$. Then define
$f(e) = e'$. This is well defined by uniqueness of the edge $e'$. So
$f$ is a well defined morphism from $\Gamma$ to $\Gamma'$ which is
clearly an immersion since it preserves edge labeling.
\end{proof}

An {\em isomorphism} from the edge labeled graph $\Gamma$ onto the
edge labeled graph $\Gamma'$ is a (label-preserving) graph morphism
that is a bijection from vertices of $\Gamma$ to vertices of
$\Gamma'$ and also a bijection from edges of $\Gamma$ to edges of
$\Gamma'$. Such an isomorphism must restrict to isomorphisms between
the connected components of $\Gamma$ and the connected components of
$\Gamma'$ and,  by Proposition \ref{uniquemorph}, it is determined
by the image of any vertex in $\Gamma$ if $\Gamma$ is connected. The
following fact is an easy consequence of Proposition
\ref{uniquemorph}.

\begin{Prop}
\label{graphisomorphisms}

Let $\Gamma$ and $\Gamma'$ be (edge labeled) connected graphs, $v$ a
vertex in $\Gamma$ and $v'$ a vertex in $\Gamma'$. Then a bijective
map $f: \Gamma \rightarrow \Gamma'$ such that $f(v) = v'$ is an
isomorphism from $\Gamma$ onto $\Gamma'$  if and only if there is a
label-preserving bijection $p \leftrightarrow p'$  between the paths
$p$ in $\Gamma$ starting at $v$ and the paths $p'$ in $\Gamma'$
starting at $v'$  such that $p$ is a circuit at $v$ if and only if
$p'$ is a circuit at $v'$.

\end{Prop}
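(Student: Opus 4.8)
The plan is to derive this as a corollary of Proposition 2.1. The key observation is that an isomorphism is the same thing as a bijective immersion whose inverse is also an immersion, so I would reduce everything to applying Proposition 2.1 twice—once to $f$ and once to a candidate inverse.

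Let me sketch the two directions.
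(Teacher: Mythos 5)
Your plan is the right one, and it is exactly the route the paper intends: the paper offers no proof of this proposition at all, merely remarking that it is ``an easy consequence of Proposition \ref{uniquemorph},'' and your observation that an isomorphism is a bijective immersion with an immersion inverse is the correct way to make that reduction. However, your write-up stops at the announcement ``Let me sketch the two directions'' and never actually sketches them, so as it stands there is no proof here. The forward direction is routine (an isomorphism carrying $v$ to $v'$ maps paths at $v$ bijectively and label-preservingly onto paths at $v'$, and preserves circuits both ways since its inverse is also a morphism). The converse is where the content lies, and it is worth spelling out because it uses \emph{both} clauses of Proposition \ref{uniquemorph}: the existence clause applied to the bijection $p \mapsto p'$ produces an immersion $f:\Gamma\to\Gamma'$ with $f(v)=v'$, and applied to the reverse bijection $p'\mapsto p$ produces an immersion $g:\Gamma'\to\Gamma$ with $g(v')=v$; but to conclude that $f$ is an isomorphism you must then invoke the \emph{uniqueness} clause on the composites $g\circ f$ and $f\circ g$, which are immersions of $\Gamma$ and $\Gamma'$ into themselves fixing $v$ and $v'$ respectively, and hence coincide with the respective identity maps. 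That last step is the one idea the proposition actually requires, and it is absent from your proposal; note also that it needs the circuit condition in both directions (``$p$ is a circuit iff $p'$ is''), since Proposition \ref{uniquemorph} only demands ``$p'$ is a circuit if $p$ is'' for each of the two applications.
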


Let $f: \tilde{\Gamma} \rightarrow \Gamma$ be an immersion of
connected graphs, $v$ a vertex in $\Gamma$ and $\tilde{v} \in
f^{-1}(v)$. Then we say that a path $p$ in $\Gamma$ starting at $v$
{\em lifts} to $\tilde{v}$ if there is a path $\tilde{p}$ in
$\tilde{\Gamma}$ starting at $\tilde{v}$ such that $f(\tilde{p}) =
p$. In this case it is clear that the lifted path $\tilde{p}$ is
uniquely determined by $\tilde{v}$ and $p$  since $\ell(\tilde{p}) =
\ell(p)$. We refer to the proposition below as the ``path lifting"
proposition.

\begin{Prop}
\label{pathlifting}

Let $f: \tilde{\Gamma} \rightarrow \Gamma$ be an immersion between
connected graphs. Then $f$ is a cover if and only if, for each
vertex $v$ in $\Gamma$ and each vertex $\tilde{v}$ in $f^{-1}(v)$,
every path $p$ in $\Gamma$ starting at $v$ lifts to a (unique) path
in $\tilde{\Gamma}$ starting at $\tilde{v}$.

\end{Prop}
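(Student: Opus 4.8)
The plan is to prove the two directions separately, in each case reducing the global "path lifting" condition to the local star-bijection condition that defines a cover, and doing so by induction on the length of paths.

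For the forward direction, I would assume $f$ is a cover and show that every path lifts. First I would handle the case of a single edge: given a vertex $\tilde v$ in $\tilde\Gamma$ with $f(\tilde v)=v$ and an edge $e$ in $\Gamma$ with $\alpha(e)=v$, the edge $e$ lies in $star(\Gamma,v)$, and since $f$ is a cover the induced map $f_{\tilde v} : star(\tilde\Gamma,\tilde v)\to star(\Gamma,v)$ is a bijection, so there is a (unique) edge $\tilde e$ in $\tilde\Gamma$ with $\alpha(\tilde e)=\tilde v$ and $f(\tilde e)=e$. Then I would induct on the length $n$ of a path $p=e_1e_2\cdots e_n$: having lifted the initial segment $e_1\cdots e_{n-1}$ to a path ending at some vertex $\tilde w$, I apply the single-edge case at $\tilde w$ to the final edge $e_n$ to extend the lift by one step. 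Uniqueness at each stage follows because the label $\ell(\tilde p)=\ell(p)$ forces the lift, as noted in the paragraph preceding the statement.

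For the converse, I would assume every path starting at $v$ lifts to every vertex of $f^{-1}(v)$, and show each star map $f_{\tilde v}$ is a bijection. Since $f$ is already an immersion, each $f_{\tilde v}$ is injective, so it suffices to prove surjectivity. Given $\tilde v$ with $f(\tilde v)=v$ and an arbitrary edge $e\in star(\Gamma,v)$, I regard $e$ as a length-one path in $\Gamma$ starting at $v$; by the lifting hypothesis there is a path $\tilde e$ in $\tilde\Gamma$ starting at $\tilde v$ with $f(\tilde e)=e$, and since $f$ takes edges to edges this lifted path is itself a single edge in $star(\tilde\Gamma,\tilde v)$ mapping onto $e$. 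Hence $f_{\tilde v}$ is onto, and therefore bijective, so $f$ is a cover.

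The only subtlety, and the step I expect to require the most care, is making sure the induction in the forward direction is stated for paths that traverse both positively and negatively oriented edges, so that the argument genuinely covers all paths in the sense of Serre rather than just positively oriented walks; the single-edge lifting lemma must be invoked for an arbitrary edge $e\in star(\Gamma,\tilde w)$ at each inductive step, and this is exactly what the bijectivity of $f_{\tilde w}$ supplies. Everything else reduces to the uniqueness of label-determined lifts already established.
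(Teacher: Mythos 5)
Your proof is correct and follows essentially the same route as the paper: the converse direction (lifting length-one paths to get surjectivity of the star maps, with injectivity coming for free from the immersion hypothesis) is identical to the paper's argument. For the forward direction the paper simply cites the standard path-lifting property for covers (Hatcher, Proposition 1.30), whereas you spell out the routine induction on path length; this is a harmless expansion of the same idea.
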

\begin{proof} If $f$ is a cover, then the fact that every
path $p$ lifts to all preimages of $v$ is  easy to prove and may be
viewed as a very special case of a much more general path lifting
property for covers of topological spaces (see for example
Proposition 1.30 of Hatcher's book \cite{hatch}). Conversely,
suppose that every path in $\Gamma$ starting at $v$ lifts to a path
in $\tilde{\Gamma}$ starting at $\tilde{v}$. It follows that if  $e$
is an edge in $\Gamma$ with $\alpha(e) = v$, then  there is an edge
$\tilde{e}$ in $\tilde{\Gamma}$ with $\alpha(\tilde{e}) = \tilde{v}$
and $f(\tilde{e}) = e$. So the map $f_{\tilde{v}}$ from
$star(\tilde{\Gamma},\tilde{v})$ to
 $star(\Gamma,v)$ is surjective. Since it is also
injective by hypothesis, it is a bijection, and so the immersion $f$
is a cover.
\end{proof}

The path lifting proposition above does not hold for immersions that
are not covers in general, but it is easy to see that maximal
initial segments of paths in $\Gamma$ lift uniquely to paths in
$\tilde{\Gamma}$, as described in the following proposition.

\begin{Prop}
\label{Liftingimmersions}

Let $f: \tilde{\Gamma} \rightarrow \Gamma$ be an immersion between
connected graphs, let $v$ be a vertex of $\Gamma$ and let $p$ be a
path in $\Gamma$ with ${\alpha}(p) = v$. Then for every vertex
$\tilde{v} \in f^{-1}(v)$ there is a unique  (possibly empty)
maximal initial segment $p_1$ of $p$  that lifts to a path at
$\tilde{v}$. Furthermore, the lift of $p_1$ at $\tilde{v}$ is
unique.

\end{Prop}
\begin{proof} Since an immersion is locally injective on
star sets, it is clear that an edge $e$ of $\Gamma$ starting at $v$
lifts to at most one edge $\tilde{e}$ in $\tilde{\Gamma}$ starting
at $\tilde{v}$. The result then follows by an easy inductive
argument.
\end{proof}
We briefly summarize some of the most basic facts linking group
theory and covers of graphs. Recall (see Stallings \cite{Stall})
that two paths $p$ and $q$ in a graph $\Gamma$ are said to be {\em
homotopy equivalent} (written $p \sim q$) if and only if it is
possible to pass from $p$ to $q$ by a finite sequence of insertions
or deletions of  paths of the form $ee^{-1}$ for various edges $e$
of $\Gamma$. Clearly $\alpha(p) = \alpha(q)$ and $\omega(p) =
\omega(q)$ if $p \sim q$. We denote the equivalence class (homotopy
class) of a path $p$ in $\Gamma$ by $[p]$. The {\em fundamental
groupoid }  $\pi_1(\Gamma)$ is a groupoid whose  objects are the
vertices of $\Gamma$ and whose morphisms are the homotopy classes
$[p]$. We regard $[p]$ as a morphism from $\alpha(p)$ to
$\omega(p)$. The multiplication in the groupoid is defined by
$[p][q] = [pq]$ if $\omega(p) = \alpha(q)$ and is undefined
otherwise.

For each vertex $v$ of $\Gamma$ the set $\pi_1(\Gamma,v) = \{[p] :
p$ is a circuit from $v$ to $v$ in $\Gamma \}$ is a group with
respect to the multiplication in $\pi_1(\Gamma)$, called the {\em
fundamental group} of $\Gamma$ based at $v$. The following fact is
classical and can be found in many sources, for example
\cite{Stall}.

\begin{Prop}
\label{fundamentalgroup}

Let $\Gamma$ be a connected graph and $v, w$ vertices of $\Gamma$.
Then

(a) $\pi_1(\Gamma,v)$ is a free group whose rank is the number of
positively oriented edges of $\Gamma$ that are not in a spanning
tree for $\Gamma$.

(b) If $p$ is a path in $\Gamma$ from $v$ to $w$, then the map $[q]
\rightarrow [p][q][p^{-1}]$ for $[q] \in \pi_1(\Gamma,w)$ defines an
isomorphism from $\pi_1(\Gamma,w)$ onto $\pi_1(\Gamma,v)$.

\end{Prop}

It is well known (see for example Hatcher's book \cite{hatch}) that
under suitable conditions on a connected topological space $\mathcal
X$, connected covers of $\mathcal X$ may be classified via conjugacy
classes of subgroups of the fundamental group of $\mathcal X$. The
following version of this result for graph covers may be found  in
many sources, for example, \cite{hatch} or \cite{Stall}.

\begin{Theorem}
\label{coversclass}

(a) Let $f: \tilde{\Gamma} \rightarrow \Gamma$ be a cover of
connected graphs, let $v$ be a vertex of $\Gamma$ and $\tilde{v} \in
f^{-1}(v)$. Then $f$ induces an embedding  of
$\pi_1(\tilde{\Gamma},\tilde{v})$ into $\pi_1(\Gamma,v)$. If
$\tilde{v_1}$ is another vertex in $f^{-1}(v)$, then the groups
$f(\pi_1(\tilde{\Gamma},\tilde{v}))$ and
$f(\pi_1(\tilde{\Gamma},\tilde{v_1}))$ are conjugate subgroups of
$\pi_1(\Gamma,v)$.

(b) Conversely, let $\Gamma$ be a connected graph, $v$ a vertex in
$\Gamma$ and $H \leq \pi_1(\Gamma,v)$.  Then there is a unique (up
to labeled graph isomorphism) connected graph $\tilde{\Gamma}$ and a
unique (up to  equivalence) covering map $f: \tilde{\Gamma}
\rightarrow \Gamma$ and a vertex $\tilde{v} \in \tilde{\Gamma}$ such
that $f(\tilde{v}) = v$ and $f(\pi_1(\tilde{\Gamma},\tilde{v})) =
H$.

\end{Theorem}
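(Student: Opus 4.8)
The plan is to treat the two parts separately, with part (a) resting on the homotopy lifting property for covers and part (b) on an explicit coset construction. For part (a), I would first define the induced map. Since $f$ preserves edge labels it sends a circuit $\tilde{p}$ at $\tilde{v}$ to a circuit $f(\tilde{p})$ at $v$, and it carries any insertion or deletion of a segment $ee^{-1}$ to the corresponding operation on $f(e)f(e)^{-1}$; hence $[\tilde{p}] \mapsto [f(\tilde{p})]$ is a well-defined homomorphism $f_* : \pi_1(\tilde{\Gamma},\tilde{v}) \to \pi_1(\Gamma,v)$. To show $f_*$ is injective I would use the unique path lifting of Proposition \ref{pathlifting} to lift not merely paths but homotopies: an elementary homotopy in $\Gamma$ (an insertion or deletion of some $ee^{-1}$) lifts uniquely to an elementary homotopy in $\tilde{\Gamma}$ fixing the initial vertex $\tilde{v}$. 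Thus if $f(\tilde{p})$ is null-homotopic, lifting the homotopy step by step yields $\tilde{p}$ null-homotopic, so $f_*$ is an embedding. For the conjugacy statement, given a second preimage $\tilde{v}_1 \in f^{-1}(v)$, connectedness of $\tilde{\Gamma}$ provides a path $\tilde{q}$ from $\tilde{v}$ to $\tilde{v}_1$; its image $c = f(\tilde{q})$ is a circuit at $v$, and groupoid conjugation by $[\tilde{q}]$ (applied under $f_*$, by $[c]$) identifies the two images as conjugate subgroups of $\pi_1(\Gamma,v)$.

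For part (b), I would build $\tilde{\Gamma}$ from right cosets of $H$. Declare two paths $p,p'$ in $\Gamma$ starting at $v$ to be equivalent when $\omega(p) = \omega(p')$ and $[pp'^{-1}] \in H$, and take the equivalence classes, written $[p]_H$, as the vertices of $\tilde{\Gamma}$. For each class $[p]_H$ with $\omega(p) = w$ and each edge $e$ of $\Gamma$ with $\alpha(e) = w$, attach an edge from $[p]_H$ to $[pe]_H$ carrying the label $\ell(e)$; set $f([p]_H) = w$ on vertices and let $f$ preserve labels on edges, and take $\tilde{v} = [1_v]_H$ (the class of the empty path at $v$). The verifications then proceed in order: that the edge assignment is well defined and compatible with inversion (the edge $e^{-1}$ at $[pe]_H$ returns to $[p]_H$); that $f$ is a label-preserving morphism with $f(\tilde{v}) = v$; that $\tilde{\Gamma}$ is connected, since every class $[p]_H$ is joined to $\tilde{v}$ by the lift of $p$; and that each star map is a bijection, so $f$ is a cover by the definition of Section~2. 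Finally, a circuit at $\tilde{v}$ projects to a circuit $p$ at $v$ whose lift ends at $[p]_H$, and this lift closes up precisely when $[p]_H = [1_v]_H$, that is, when $[p] \in H$; this gives $f_*(\pi_1(\tilde{\Gamma},\tilde{v})) = H$.

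For uniqueness, I would show that two pointed covers $(\tilde{\Gamma},\tilde{v})$ and $(\tilde{\Gamma}',\tilde{v}')$ over $\Gamma$ with the same image subgroup $H$ are isomorphic by a label-preserving map sending $\tilde{v}$ to $\tilde{v}'$. I would construct this map by lifting: send the endpoint of a lifted path at $\tilde{v}$ to the endpoint of the correspondingly labeled lifted path at $\tilde{v}'$, using the equality of subgroups to check independence of the chosen path, and then appeal to Proposition \ref{graphisomorphisms} together with Proposition \ref{uniquemorph} to conclude that the resulting bijection is an isomorphism and that the covering map is unique up to equivalence.

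The main obstacle I anticipate is the homotopy lifting step in part (a): the unique path lifting of Proposition \ref{pathlifting} must be upgraded to unique lifting of the elementary homotopies that generate $\sim$, with care taken that each insertion or deletion of an $ee^{-1}$ segment downstairs lifts to the same kind of operation upstairs without disturbing the basepoint. Once homotopy lifting is in hand, both the injectivity in (a) and the identification of the fundamental group in (b) follow cleanly, and the coset construction handles the remaining existence and uniqueness claims.
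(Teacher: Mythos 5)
Your argument is correct, but note that the paper offers no proof of Theorem \ref{coversclass} at all: it is quoted as a classical result with references to \cite{hatch} and \cite{Stall}, and your proof is precisely the standard one found there (induced map on homotopy classes plus lifting of elementary $ee^{-1}$-homotopies for part (a), and the coset-of-paths construction with star-bijection verification for part (b)). Nothing to flag beyond that.
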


If $f: \tilde{\Gamma} \rightarrow \Gamma$ is an immersion of
connected graphs, a labeled graph automorphism $\gamma$ of
$\tilde{\Gamma}$ is called a {\em deck transformation} of
$\tilde{\Gamma}$ if $f = f \circ \gamma$, i.e. $f(\tilde{v}) =
f(\gamma(\tilde{v}))$ for all vertices $\tilde{v}$ in
$\tilde{\Gamma}$. The deck transformations of $\tilde{\Gamma}$ form
a group $G(\tilde{\Gamma})$ with respect to composition of
automorphisms.

A graph  cover $f: \tilde{\Gamma} \rightarrow \Gamma$ is called a
{\em normal} cover if, for every vertex $v$ in $\Gamma$ and every
pair of vertices $\tilde{v}_1, \tilde{v}_2 \in f^{-1}(v)$, there is
a deck transformation that takes $\tilde{v}_1$ to $\tilde{v}_2$:
equivalently (\cite{hatch}, Proposition 1.39), $f$ is a normal cover
if and only if the subgroup $H = f(\pi_1(\tilde{\Gamma},\tilde{v}))$
of $\pi_1(\Gamma,f(v))$  that defines the cover is a normal subgroup
of $\pi_1(\Gamma,f(v))$. The {\em universal cover} of $\Gamma$ is
 the cover $\tilde{\Gamma}$ corresponding to the trivial subgroup of
 $\pi_1(\Gamma,f(v))$: a cover of $\Gamma$ is isomorphic to the
 universal cover if and only if it is a tree.

If $f : \tilde{\Gamma} \rightarrow \Gamma$ is a {\em cover} of
connected graphs, then there is a well known connection between the
group $G(\tilde{\Gamma})$ of deck transformations and the
fundamental group of $\Gamma$. The following result is a special
case of a more general standard result in topology (see for example
\cite{hatch}, Proposition 1.39).

\begin{Theorem}
\label{deckcovers}

Let $f: \tilde{\Gamma} \rightarrow \Gamma$ be a cover of connected
graphs, let $\tilde{v}$ be a vertex of $\tilde{\Gamma}$ with
$f(\tilde{v}) = v$ and let $H$ be the subgroup $H =
f(\pi_1(\tilde{\Gamma},\tilde{v}))$ of $\pi_1(\Gamma,v)$. Then
$G(\tilde{\Gamma}) \cong N(H)/H$, where $N(H)$ is the normalizer of
$H$ in $\pi_1(\Gamma,v)$. In particular, if $\tilde{\Gamma}$ is the
universal cover of $\Gamma$, then $G(\tilde{\Gamma}) \cong
\pi_1(\Gamma,v)$.

\end{Theorem}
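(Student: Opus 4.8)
The plan is to set up an explicit correspondence between deck transformations of $\tilde\Gamma$ and certain cosets in $\pi_1(\Gamma,v)$, and then verify it is a group isomorphism. First I would record the basic constraints on a deck transformation $\gamma$. Since $f\circ\gamma=f$, the vertex $\gamma(\tilde v)$ lies in $f^{-1}(v)$, and since $\tilde\Gamma$ is connected, Proposition \ref{uniquemorph} guarantees that $\gamma$ is completely determined by the single value $\gamma(\tilde v)$. Thus the assignment $\gamma\mapsto\gamma(\tilde v)$ embeds $G(\tilde\Gamma)$ into $f^{-1}(v)$, and the task is to identify its image and the induced group structure. Because $f$ is a cover, Proposition \ref{pathlifting} lets me lift every circuit $p$ at $v$ to a unique path $\tilde p$ at $\tilde v$; the endpoint $\omega(\tilde p)$ depends only on the class $[p]\in\pi_1(\Gamma,v)$, and two circuits yield the same endpoint precisely when their classes lie in the same right coset of $H$. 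This produces a bijection between the right cosets $H\backslash\pi_1(\Gamma,v)$ and $f^{-1}(v)$.

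Next I would determine exactly which vertices $\tilde v'\in f^{-1}(v)$ are of the form $\gamma(\tilde v)$ for some deck transformation. Write $\tilde v'=\omega(\tilde p)$, where $\tilde p$ lifts a circuit $p$ with $g=[p]$. Using path lifting I identify the paths at $\tilde v$ with the paths at $\tilde v'$ by matching labels (equivalently, by matching their images under $f$); this is a label-preserving bijection of the kind demanded by Proposition \ref{graphisomorphisms}. A path $q$ at $v$ lifts to a circuit at $\tilde v$ exactly when $[q]\in H$, and lifts to a circuit at $\tilde v'$ exactly when $[pqp^{-1}]\in H$, i.e. when $[q]\in g^{-1}Hg$. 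Hence the label matching respects circuits if and only if $g^{-1}Hg=H$, that is, $g\in N(H)$. By Proposition \ref{graphisomorphisms} this is precisely the condition for an isomorphism $\gamma$ with $\gamma(\tilde v)=\tilde v'$ to exist, and such a $\gamma$ is automatically a deck transformation, since $f\circ\gamma$ and $f$ are then label-preserving morphisms agreeing on $\tilde v$, hence equal by Proposition \ref{uniquemorph}. Therefore the image of $\gamma\mapsto\gamma(\tilde v)$ corresponds exactly to the cosets $Hg$ with $g\in N(H)$, i.e. to $N(H)/H$ (left and right cosets coinciding since $H\trianglelefteq N(H)$).

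Finally I would promote this bijection to a group isomorphism $\Phi\colon G(\tilde\Gamma)\to N(H)/H$ sending $\gamma$ to the coset $Hg$ with $\omega(\tilde p)=\gamma(\tilde v)$. To check that $\Phi$ is a homomorphism I track a composite: if $\gamma_1(\tilde v)$ and $\gamma_2(\tilde v)$ are the endpoints of the lifts at $\tilde v$ of circuits $p_1,p_2$ with $[p_i]=g_i\in N(H)$, then because $\gamma_1$ is label-preserving it carries the lift of $p_2$ at $\tilde v$ to the lift of $p_2$ at $\gamma_1(\tilde v)$, so $(\gamma_1\circ\gamma_2)(\tilde v)$ is the endpoint of the lift of $p_1p_2$ at $\tilde v$, which corresponds to $Hg_1g_2=(Hg_1)(Hg_2)$. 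Thus $\Phi$ is an isomorphism, proving $G(\tilde\Gamma)\cong N(H)/H$; the universal-cover statement is then immediate, since there $H$ is trivial, so $N(H)=\pi_1(\Gamma,v)$ and $N(H)/H\cong\pi_1(\Gamma,v)$. I expect the main obstacle to be bookkeeping rather than anything conceptual: keeping the left/right coset conventions consistent throughout, and in particular verifying the composition formula $(\gamma_1\circ\gamma_2)(\tilde v)\leftrightarrow Hg_1g_2$ with the correct order, since an inverted convention there would force $\Phi$ to be an anti-homomorphism instead of a homomorphism.
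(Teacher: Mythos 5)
Your argument is correct. Note, though, that the paper does not actually prove Theorem \ref{deckcovers}: it is quoted as a special case of a standard topological fact (Hatcher, Proposition 1.39), so there is no in-paper proof to compare against line by line. What your write-up does match, almost step for step, is the paper's own proof of the immersion analogue, Theorem \ref{deck}: your identification of $f^{-1}(v)$ with right cosets $Hg$ via endpoints of lifts is the coset-graph identification used there; your criterion ``a deck transformation sends $\tilde v$ to $\tilde v'$ iff $g\in N(H)$'' is exactly Lemma \ref{normalizerlift2} combined with Lemma \ref{iso} (with $g^{-1}Hg=H$ replacing the monoid condition $gHg^{-1},g^{-1}Hg\subseteq H$, which are equivalent in the group setting); and your computation $(\gamma_1\circ\gamma_2)(\tilde v)=\omega(\widetilde{p_1p_2})$ is the same homomorphism check as in the proof of Theorem \ref{deck}. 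The individual steps are all sound: the endpoint of the lift depends only on $[p]$ because homotopy lifting holds for covers, the circuit-matching criterion correctly yields $f(\pi_1(\tilde\Gamma,\tilde v'))=g^{-1}Hg$, and the order in the composition formula does come out as $Hg_1g_2$ rather than $Hg_2g_1$, so $\Phi$ is a genuine homomorphism as you claimed.
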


While group theory provides a powerful algebraic tool for
classifying and studying {\em covers} of graphs (or topological
spaces in general), it appears that groups do not provide an
adequate algebraic tool to classify {\em immersions} between graphs.
For example let $\Gamma(a)$ denote the Cayley graph of $\mathbb Z =
Gp\langle a : \emptyset \rangle$ with respect to the generating set
$\{a\}$. The vertices of $\Gamma(a)$ may be identified with the
integers and there is a directed edge labeled by $a$ from $n$ to
$n+1$ for each integer $n$. Then $\Gamma(a)$ is the universal cover
of the circle $B_{\{a\}}$. Any connected subgraph of $\Gamma(a)$
immerses into $B_{\{a\}}$ but all such graphs have trivial
fundamental groups, so they cannot be distinguished by subgroups of
$\mathbb Z$. We need a different algebraic tool to classify
immersions and to encode the fact that paths only sometimes lift
under immersions (Proposition \ref{Liftingimmersions}). In
subsequent sections of this paper, we show that the theory of {\em
inverse monoids} provides a useful algebraic tool to study graph
immersions and in particular to obtain analogues of Theorems
\ref{coversclass} and \ref{deckcovers}.

\section{Inverse monoids and their closed inverse submonoids }

An {\em inverse monoid} is a monoid $M$ such that for every element
$a \in M$ there is a {\em unique} element $a^{-1}$ in $M$ such that

\begin{center}
$a = aa^{-1}a \,\,\,\,$ and $\,\,\,\, a^{-1} = a^{-1}aa^{-1}.$
\end{center}

It is clear that the elements $aa^{-1}$ and $a^{-1}a$ of an inverse
monoid are {\em idempotents} of $M$. In general, $aa^{-1} \neq
a^{-1}a$ and neither of these idempotents is necessarily equal to
the identity $1$ of $M$. We denote the set of idempotents of an
inverse monoid $M$ by $E(M)$. An important elementary fact about
inverse monoids is that their {\em idempotents commute}, i.e. $ef =
fe$ for all $e,f \in M$. In fact  inverse monoids may be
 characterized alternatively as  (von Neumann) regular monoids whose
idempotents commute.

Inverse monoids provide an appropriate algebraic tool for studying
{\em partial symmetry} of mathematical objects in much the same way
as groups are used to study symmetry. We refer the reader to the
book of Lawson \cite{Law} for an exposition of this point of view
and for much basic information about inverse monoids.

A standard example of an inverse monoid is the {\em symmetric
inverse monoid} $SIM(X)$ on a set $X$. This is the set of bijections
between subsets of $X$ with respect to the usual composition of
partial maps. The inverse of a bijection $f \in SIM(X)$ with domain
$A$ and range $B$ is the inverse map $f^{-1}$ with domain $B$ and
range $A$: the idempotents of $SIM(X)$ are the identity maps on
subsets of $X$ (including the empty map $0$ with domain the empty
subset). The analogue for inverse monoids of Cayley's theorem for
groups is the {\em Wagner-Preston Theorem}, namely every inverse
monoid embeds in a suitable symmetric inverse monoid (see \cite{Law}
for a proof of this theorem).

The {\em natural partial order} on an inverse monoid $M$ is defined
by $a \leq b$ iff $a = eb$ for some idempotent $e \in E(M)$
(equivalently $a = bf$ for some idempotent $f \in E(M)$ or
equivalently $a = aa^{-1}b$ or equivalently $a = ba^{-1}a$). This
extends the natural partial order on $E(M)$ defined by  $e \leq f$
iff $e = ef = fe$. With respect to this partial order, $E(M)$ forms
a {\em lower semilattice} with meet operation $e \wedge f = ef$ for
all $e,f \in E(M)$. For the symmetric inverse monoid $SIM(X)$, the
natural partial order corresponds to restriction of a partial
one-one map to a subset of its domain. The semilattice of
idempotents of $SIM(X)$ is of course the Boolean lattice of subsets
of $X$ with respect to inclusion.


For each subset $N$ of an inverse monoid $M$, we denote by
$N^{\omega}$ the set of all elements $m \in M$ such that $m \geq n$
for some $n \in N$.  The subset $N$ of $M$ is called {\em closed} if
$N = N^{\omega}$. Closed inverse submonoids of an inverse monoid $M$
arise naturally in the representation theory of $M$ by partial
injections on a set developed by Schein
 \cite{schein2}. An inverse monoid $M$ acts  by
 injective partial functions on a set $Q$ if there is a homomorphism
 from $M$ to $SIM(Q)$.  Denote by $qm$ the image of $q$ under the
 action of $m$ if $q$ is in the domain of the action by $m$. (Here
 we are considering $M$ as acting on the right on $Q$.)

If an inverse monoid $M$ acts on $Q$ by injective partial functions,
then for every $q \in
 Q, \, Stab(q) = \{m \in M : qm = q\}$ is a closed inverse submonoid
 of $M$. Conversely, given a  closed inverse submonoid $H$ of $M$, we
 can construct a transitive representation of $M$ as follows. A
 subset of $M$ of the form $(Hm)^{\omega}$ where $mm^{-1} \in H$ is
 called a {\em right ${\omega}$-coset} of $H$. Let $X_H$ denote the
 set of right $\omega$-cosets of $H$. If $(Hn)^{\omega} \in X_H$ and
 $m \in M$, define an action by
  $(Hn)^{\omega}  m = (Hnm)^{\omega}$ if
 $(Hnm)^{\omega} \in X_H$ and
 undefined otherwise. This defines a transitive action of $M$ (on the
 right) on
 $X_H$. Conversely, if $M$ acts transitively on $Q$, then this action
 is equivalent in the obvious sense to the action of $M$ on the
 right $\omega$-cosets of $Stab(q)$ in $M$ for any $q \in Q$. See
 \cite{schein2} or \cite{Pet}  for details. Dually, {\em left
 $\omega$-cosets} of $H$ in $M$ (sets of the form $(mH)^{\omega}$ where $m^{-1}m \in H$)
 arise in connection with left actions of
 $M$ by partial one-one maps on some set.

 If $M$ is generated as
 an inverse monoid by a set $X$ and $H$ is a closed inverse
 submonoid of $M$ then the graph of
 right $\omega$-cosets of $H$ in $M$  is the graph with
 vertices the set $X_H$ of right $\omega$-cosets of $H$ in $M$ and with an edge
 labeled by $x \in X \cup X^{-1}$ from $(Hm)^{\omega}$ to
 $(Hmx)^{\omega}$ if $mm^{-1}, mxx^{-1}m^{-1} \in H$.

It is well known  that free inverse monoids exist. We will denote
the free inverse monoid on a set $X$ by $FIM(X)$. The structure of
free inverse monoids on one generator was determined by Gluskin
\cite{Gluskin}. The structure of free inverse monoids in general was
determined much later independently by Scheiblich \cite{Sch} and
Munn \cite{Munn2}. Scheiblich's description  for elements of
$FIM(X)$ is in terms of rooted Schreier subsets of the free group
$FG(X)$, while Munn's description is in terms of birooted
edge-labeled trees. Scheiblich's description provides an important
example of a McAlister triple, in the spirit of the McAlister
$P$-theorem \cite{Mc2}, while Munn's description lends itself most
directly to a solution to the word problem for $FIM(X)$. It is not
difficult to see the equivalence of the two descriptions.
A variation on Scheiblich's approach is provided in Lawson's book
\cite{Law}. The version below is a slight variation on Munn's
approach, the essential difference being that for some purposes it
is somewhat more convenient to regard Munn's birooted trees as
subtrees of the Cayley graph of the free group $FG(X)$, with the
initial root identified with the vertex $1$ in the Cayley graph.

Denote by ${\Gamma}(X)$ the Cayley graph of the free group $FG(X)$
with respect to the usual presentation, $FG(X) = Gp \langle X :
\emptyset \rangle$. Thus ${\Gamma}(X)$ is an infinite tree whose
vertices correspond to the elements of $FG(X)$ (in reduced form) and
with a directed edge labeled by $x \in X$ from $g$ to $gx$ (and an
inverse edge labeled by $x^{-1}$ from $gx$ to $g$). For each word $w
\in (X \cup X^{-1})^*$, denote by $MT(w)$ the finite subtree of the
tree ${\Gamma}(X)$ obtained by reading the word $w$ as the label of
a path in ${\Gamma}(X)$, starting at $1$. Thus, for example, if $w =
aa^{-1}bb^{-1}ba^{-1}abb^{-1}$, then $MT(w)$ is the tree pictured in
Figure 2.

\bigskip

\centerline{\includegraphics[height=1.3in]{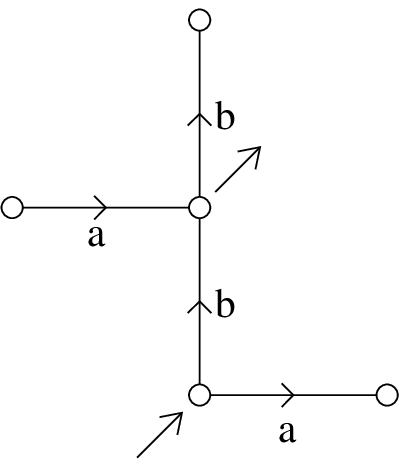}}

\medskip

\begin{center}
Figure 2
\end{center}

\bigskip

One may view $MT(w)$ as a birooted tree, with initial root $1$ and
terminal root $r(w)$, the reduced form of the word $w$ in the usual
group-theoretic sense. Munn's solution \cite{Munn2} to the word
problem in $FIM(X)$ may be stated in the following form.

\begin{Theorem}
 If $u,v \in (X \cup X^{-1})^*$, then $u = v$ in $FIM(X)$ iff
$MT(u) = MT(v)$ and $r(u) = r(v)$.

\end{Theorem}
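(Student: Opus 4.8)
The plan is to realize $FIM(X)$ concretely as an inverse monoid of birooted subtrees of $\Gamma(X)$ and then to show that the assignment $w \mapsto (MT(w),r(w))$ is the corresponding isomorphism. First I would let $M$ denote the set of pairs $(T,g)$, where $T$ is a finite subtree of $\Gamma(X)$ containing the vertex $1$ and $g$ is a vertex of $T$, and equip $M$ with the product $(T_1,g_1)(T_2,g_2) = (T_1 \cup g_1 T_2,\, g_1 g_2)$, where $g_1 T_2$ is the left-translate of $T_2$ by $g_1$ in $\Gamma(X)$. A direct check shows the product is well defined, associative, has identity $(\{1\},1)$, and that $(T,g)^{-1} = (g^{-1}T,\, g^{-1})$ makes every element regular; moreover the idempotents are exactly the pairs $(T,1)$, and since $(T_1,1)(T_2,1) = (T_1\cup T_2,1) = (T_2,1)(T_1,1)$ they commute. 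Hence $M$ is an inverse monoid.

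Next I would define $\phi\colon (X\cup X^{-1})^* \to M$ by sending a generator $x$ to the birooted tree consisting of the single edge from $1$ to $x$ with terminal root $x$. Using $MT(uv) = MT(u) \cup r(u)\,MT(v)$ and $r(uv) = r(u)r(v)$, an easy induction gives $\phi(w) = (MT(w),r(w))$ for every word $w$, together with $\phi(x^{-1}) = \phi(x)^{-1}$. Since $M$ is an inverse monoid, the defining universal property of $FIM(X)$ promotes $x \mapsto \phi(x)$ to a homomorphism $\bar\phi\colon FIM(X) \to M$ whose composite with the canonical map agrees with $\phi$. This already settles the forward implication: if $u = v$ in $FIM(X)$ then $(MT(u),r(u)) = \bar\phi(u) = \bar\phi(v) = (MT(v),r(v))$.

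The substance is the converse, i.e. the injectivity of $\bar\phi$, for which I would produce a \emph{canonical} word for each birooted tree and show that every word equals its canonical word in $FIM(X)$. For a vertex $h$ of $\Gamma(X)$ write $\bar h$ for the reduced word labelling the geodesic from $1$ to $h$, so that $\bar h\,\bar h^{-1}$ is an idempotent of $FIM(X)$. Given $(T,g)$, set $e_T = \prod_h \bar h\,\bar h^{-1}$ with the product taken over all vertices $h$ of $T$; because idempotents commute the order is irrelevant, $e_T$ is an idempotent with $MT(e_T) = T$ and $r(e_T)=1$, and I would take $c(T,g) = e_T\,\bar g$ as the canonical word, noting $\phi(c(T,g)) = (T,g)$. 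The heart of the argument is the claim that $w = e_{MT(w)}\,\overline{r(w)}$ in $FIM(X)$ for every word $w$, proved by induction on length: the base case is $x = xx^{-1}x$, and for the step $w = w'x$ one uses the inductive form for $w'$ and distinguishes whether $x$ extends or backtracks the geodesic to $r(w')$. In the extending case the new factor $\overline{r(w)}\,\overline{r(w)}^{-1}$ is absorbed using $aa^{-1}a = a$; in the backtracking case the vertex $r(w)$ already lies in $MT(w')$, so the relevant idempotent is already a factor of $e_{MT(w')}$ and the trailing $x^{-1}x$ is absorbed by commuting it into that factor. Granting this claim, $\bar\phi(u) = \bar\phi(v)$ forces $u = c(\bar\phi(u)) = c(\bar\phi(v)) = v$ in $FIM(X)$.

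The main obstacle is precisely this inductive identity $w = e_{MT(w)}\,\overline{r(w)}$: it is where the defining relations of an inverse monoid must be shown strong enough to collapse an arbitrary word onto the canonical datum of its Munn tree and reduced form. The delicate bookkeeping is tracking which idempotent factors are already present in $e_{MT(w')}$ — and hence absorbable — and it is exactly here that commutativity of idempotents is indispensable.
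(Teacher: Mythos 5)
The paper does not actually prove this statement: it is quoted as Munn's solution to the word problem for $FIM(X)$, with the proof deferred to the citation \cite{Munn2}. So there is no internal argument to compare against, and your proposal should be judged on its own. It is sound, and it is in essence the classical proof: realize the birooted trees $(T,g)$ as an inverse monoid $M$ under $(T_1,g_1)(T_2,g_2)=(T_1\cup g_1T_2,\,g_1g_2)$, use the universal property to get $\bar\phi\colon FIM(X)\to M$ with $\bar\phi(w)=(MT(w),r(w))$ (this gives the forward implication), and then prove injectivity by collapsing every word to the canonical form $e_{MT(w)}\,\overline{r(w)}$. All the individual verifications you invoke do go through: $M$ is regular with commuting idempotents, hence inverse; $e_T$ is well defined up to the relations because the factors $\bar h\,\bar h^{-1}$ are idempotents and so commute; $MT(e_T)=T$ because a connected subgraph of the tree $\Gamma(X)$ containing $1$ and $h$ contains the whole geodesic from $1$ to $h$, and every edge of $T$ lies on such a geodesic; and the backtracking step works exactly as you indicate, since $\bar g\,\bar g^{-1}=\bar h x^{-1}x\bar h^{-1}$ is a factor of $e_{MT(w')}$ when $g=r(w')$, and $\bar h x^{-1}x\bar h^{-1}\bar h=\bar h(\bar h^{-1}\bar h)(x^{-1}x)=\bar h x^{-1}x$ by commutativity of idempotents.

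One sub-case in the inductive step is glossed over, though it costs nothing. When $w=w'x$ and $x$ extends the geodesic (i.e.\ $\overline{r(w')}x$ is reduced), the vertex $r(w)=r(w')x$ need not be new: it may already lie in $MT(w')$ (e.g.\ $w'=xx^{-1}$, $w=xx^{-1}x$). In that sub-case there is no new idempotent factor to absorb; but then $MT(w)=MT(w')$ (a connected subgraph of a tree containing two adjacent vertices contains the edge between them) and $\overline{r(w)}=\overline{r(w')}x$ literally, so the claimed identity is a tautology. You should state the trichotomy as: new vertex appended, old vertex revisited with $\overline{r(w')}x$ reduced, or backtracking. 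With that adjustment the argument is complete.
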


Thus elements of $FIM(X)$ may be viewed as pairs $(MT(w),r(w))$ (or
as birooted edge-labeled trees, which was the way that Munn
described his results).  Multiplication in $FIM(X)$ is performed as
follows. If $u,v \in (X \cup X^{-1})^*$, then $MT(uv) = MT(u) \cup
r(u).MT(v)$ (just translate $MT(v)$ so that its initial root
coincides with the terminal root of $MT(u)$ and take the union of
$MT(u)$ and the translated copy of $MT(v)$: the terminal root is of
course $r(uv)$). The idempotents of $FIM(X)$ are the {\em Dyck
words}, i.e. the words $w$ whose reduced form is $1$: two Dyck words
represent the same idempotent of $FIM(X)$ if and only if they have
the same Munn tree. Munn's approach has been greatly extended by
Stephen \cite{Ste2} to a general theory of presentations of inverse
monoids by generators and relations.

 In their paper \cite{MM1}, Margolis and Meakin studied closed
 inverse submonoids of free inverse monoids and showed how they
 could be used to classify immersions between connected graphs. In particular, they showed that
 closed inverse submonoids of free inverse monoids have surprisingly nice finiteness properties,
 and they may be constructed from free actions of groups on trees.
 A closed inverse submonoid of a free inverse monoid is  not necessarily a free
 inverse monoid, but it admits an idempotent pure morphism onto a free inverse monoid.  We
 briefly recall some of the results from the paper \cite{MM1} that are
 relevant for our current purposes.

 Recall that an {\em inverse category} $\mathcal C$ is a category with the property that for
 each morphism
 $p$ there is a unique morphism $p^{-1}$ such that $ p = pp^{-1}p$ and
 $p^{-1} = p^{-1}pp^{-1}$.
 The {\em loop monoid} of such a category at the vertex (object)
 $v$ is the set $L({\mathcal C},v) = \{p: p$ is a morphism in $\mathcal C$ from $v$
 to $v\}$. Then $L({\mathcal C},v)$ is an inverse monoid.
Let $\Gamma$ be a (connected) graph.  Recall from \cite{MM1} that
 the {\em free inverse category} $FIC(\Gamma)$ on $\Gamma$ is the
 quotient of the free category $FC(\Gamma)$ on $\Gamma$ by the
 category congruence $\sim_i$ induced by all relations of the form
 $p = pp^{-1}p$ and $pp^{-1}qq^{-1} = qq^{-1}pp^{-1}$ if $\alpha(p)
 = \alpha(q)$ for paths $p,q$ in $\Gamma$. Denote the $\sim_i$-class
 of $p$ by $[[p]]$. Then $FIC(\Gamma)$ is an  inverse category. Of
 course if $\Gamma = B_X$ then
 $FIC(\Gamma) = FIM(X)$.

 The {\em loop monoid} of $FIC(\Gamma)$ at the vertex $v$ of
 $\Gamma$ is

 \begin{center}

 $L(\Gamma,v) = \{[[p]]: p$ is a circuit in $\Gamma$
 based at $v\}$.

 \end{center}

If $\Gamma$ is labeled over $X \cup X^{-1}$
 consistent with an immersion into $B_X$  it follows that
 if $p \, \sim_i \, q$
 for paths $p,q$ in $\Gamma$, then $\ell(p)$ and $\ell(q)$ are equal
 in $FIM(X)$.
So we will {\em view labels of paths in $\Gamma$ as
 elements of $FIM(X)$} throughout the sequel.
We note that $FIM(X)$ acts on
 $\Gamma^0$ in a natural way (namely $w \in FIM(X)$ acts on $v_1$ and takes $v_1$ to
 $v_2$ if there is a path labeled by $w$ from $v_1$ to $v_2$). Then $L(\Gamma,v)$ is the stabilizer
 of $v$ under this action, so {\em each loop monoid is a closed
 inverse submonoid of $FIM(X)$}. See Proposition 4.3 of \cite{MM1}
 for details.

 Recall that two closed inverse submonoids $H$ and $K$ of an inverse
 monoid $M$ are said to be {\em conjugate} (written $H \approx K$)
 if there exists $m \in M$ such that $mHm^{-1} \subseteq K$ and
 $m^{-1}Km \subseteq H$. Conjugation is an equivalence relation on
 the set of closed inverse submonoids of $M$.
 We caution however that, unlike the situation in group theory,
 conjugate closed inverse submonoid of inverse monoids are not
 necessarily isomorphic. For example, the subsets
 $\{1,aa^{-1},a^2a^{-2}\}$ and $\{1,aa^{-1},a^{-1}a,aa^{-2}a\}$ of
 $FIM(\{a\})$ are conjugate closed inverse submonoids of
 $FIM(\{a\})$ that are clearly not isomorphic.

 Immersions of connected graphs over $B_X$ are classified via
 conjugacy classes of closed inverse submonoids of $FIM(X)$ as
 indicated in the following theorem (Theorem 4.4 of \cite{MM1}).

\begin{Theorem}
\label{classimmb}

Let $\Gamma$ be a connected graph with edges labeled over $X \cup
X^{-1}$ consistent with an immersion into $B_X$. Then each loop
monoid is a closed inverse submonoid of $FIM(X)$ and the set of all
loop monoids $L(\Gamma,v)$ for $v$ a vertex of $\Gamma$ is a
conjugacy class of the set of closed inverse submonoids of $FIM(X)$.
Conversely, if $H$ is any closed inverse submonoid of a free inverse
monoid $FIM(X)$ then there is some graph $\Gamma$ and an immersion
$f: \Gamma \rightarrow B_X$ such that $H$ is a loop monoid of
$FIC(\Gamma)$: furthermore, $\Gamma$ is unique (up to graph
isomorphism) and $f$ is unique (up to equivalence).

\end{Theorem}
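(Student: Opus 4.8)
The plan is to handle the four assertions of the theorem in turn. That each $L(\Gamma,v)$ is a closed inverse submonoid of $FIM(X)$ has already been reduced, in the discussion preceding the statement, to the remark that $L(\Gamma,v)$ is the stabilizer $Stab(v)$ of $v$ under the natural action of $FIM(X)$ on $\Gamma^0$, together with the general fact from Section 3 that stabilizers of an action by injective partial functions are closed; so I would simply cite that. To show the loop monoids of a fixed connected $\Gamma$ are pairwise conjugate, take vertices $v,w$, use connectivity to choose a path $p$ from $v$ to $w$, and set $m = \ell(p) \in FIM(X)$. For any circuit $q$ at $w$ the path $pqp^{-1}$ is a circuit at $v$ with label $m\,\ell(q)\,m^{-1}$, so $m\,L(\Gamma,w)\,m^{-1} \subseteq L(\Gamma,v)$, and symmetrically $m^{-1}L(\Gamma,v)m \subseteq L(\Gamma,w)$; hence $L(\Gamma,v) \approx L(\Gamma,w)$.

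To upgrade this to the statement that the loop monoids form an \emph{entire} conjugacy class, I would invoke the correspondence (due to Schein and recalled in Section 3) between transitive actions by injective partial maps and conjugacy classes of closed inverse submonoids: within one transitive action the point stabilizers form exactly one such class. Connectivity of $\Gamma$ makes the action of $FIM(X)$ on $\Gamma^0$ transitive, since any vertex $w$ is the image of $v$ under the label of a path joining them; thus the orbit of $v$ is all of $\Gamma^0$, and the stabilizers occurring in it, namely the loop monoids $L(\Gamma,w)$, exhaust the conjugacy class of $L(\Gamma,v)$.

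For the converse, given a closed inverse submonoid $H$ of $FIM(X)$ I would take $\Gamma$ to be the graph of right $\omega$-cosets of $H$ described in Section 3: its vertices are the right $\omega$-cosets $(Hm)^\omega$ with $mm^{-1} \in H$, there is an $x$-labeled edge from $(Hm)^\omega$ to $(Hmx)^\omega$ whenever $mm^{-1},\, mxx^{-1}m^{-1} \in H$, and I would base it at $v_0 = H = (H\cdot 1)^\omega$. Since distinct edges leaving a vertex carry distinct labels, the natural label-preserving map $f : \Gamma \rightarrow B_X$ is an immersion. The heart of the argument is the identity $L(\Gamma,v_0) = H$: a circuit at $v_0$ amounts to reading its label $w = \ell(p)$ as a walk through $\omega$-cosets that starts and ends at $H$, where existence of the intermediate vertices forces the relevant prefix idempotents into $H$ and closing up forces $(Hw)^\omega = H$; using Munn's tree description of $FIM(X)$ and the $\omega$-closure one checks this is equivalent to $w \in H$, yielding both inclusions.

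Uniqueness would then come from the rigidity already in hand. If $\Gamma'$ based at $v_0'$ is another connected graph immersing in $B_X$ with $L(\Gamma',v_0') = H$, then in either graph a word $w$ labels a path from the basepoint exactly when $ww^{-1} \in H$ and labels a circuit exactly when $w \in H$; the resulting label-preserving, circuit-respecting bijection between the two basepointed path sets is precisely what Proposition \ref{graphisomorphisms} needs to produce a label-preserving isomorphism $\Gamma \rightarrow \Gamma'$ sending $v_0$ to $v_0'$, and equivalence of the immersions follows from Proposition \ref{uniquemorph}. I expect the main obstacle to be the identity $L(\Gamma,v_0) = H$ in the existence step: it is exactly here that one must separate ``$w$ traces a closed walk through legitimate $\omega$-cosets'' from ``$w$ lies in $H$'', and this demands careful bookkeeping with the closure operator and the tree structure of $FIM(X)$ rather than a single short manipulation.
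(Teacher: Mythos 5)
The paper does not actually prove this statement: it is quoted verbatim as Theorem 4.4 of \cite{MM1}, so there is no in-paper proof to compare against. Your reconstruction follows the same route as the original argument in \cite{MM1}: closedness via stabilizers of the partial action of $FIM(X)$ on $\Gamma^0$, pairwise conjugacy via conjugation by the label of a connecting path, the ``entire conjugacy class'' claim via Schein's theory of transitive representations, and the converse via the graph of right $\omega$-cosets of $H$ (which the paper's own Remark after Theorem \ref{classimm} confirms is the graph used in \cite{MM1}). The outline is sound, and the two steps you flag as needing care do go through: a word $u$ labels a path from the base coset $H$ iff every prefix idempotent $u_iu_i^{-1}$ lies in $H$, which by closedness reduces to $uu^{-1}\in H$, and the path is a circuit iff $(Hu)^{\omega}=(H\cdot 1)^{\omega}$, i.e.\ $u\in H$ by the criterion $(Ha)^{\omega}=(Hc)^{\omega}\Leftrightarrow ac^{-1}\in H$; this gives $f(L(\Gamma,v_0))=H$ and, combined with Proposition \ref{graphisomorphisms}, the uniqueness. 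The one imported fact you should be explicit about is that any closed inverse submonoid conjugate to a point stabilizer of a transitive action is itself a point stabilizer (so that the loop monoids exhaust the conjugacy class); this is true and is in Schein's theory, but it is the only place where your argument rests on a citation rather than a computation.
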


The results of this theorem can be extended somewhat to obtain a
classification of immersions over arbitrary graphs, as indicated in
the following theorem (Theorem 4.5 of \cite{MM1}). This theorem may
be viewed as the analogue for graph immersions of the classification
theorem for graph covers (Theorem \ref{coversclass} above).

\begin{Theorem}
\label{classimm}

Let $f: \Delta \rightarrow \Gamma$ be an immersion of connected
graphs where $\Delta$ and $\Gamma$ are edge labeled over $X \cup
X^{-1}$ consistent with immersions into $B_X$. If $v$ is a vertex of
$\Gamma$ and $v_1 \in f^{-1}(v)$ then $f$ induces an embedding of
$L(\Delta,v_1)$ into $L(\Gamma,v)$. Conversely, let $\Gamma$ be a
graph edge-labeled over $X \cup X^{-1}$ as usual and let $H$ be a
closed inverse submonoid of $FIM(X)$ so that $H \subseteq
L(\Gamma,v)$ for some vertex $v$ in $\Gamma$. Then there is a graph
$\Delta$, an immersion $f: \Delta \rightarrow \Gamma$ and a vertex
$v_1$ in $\Delta$ such that $f(v_1) = v$ and $f(L(\Delta,v_1)) = H$.
Furthermore, $\Delta$ is unique (up to graph isomorphism) and $f$ is
unique (up to equivalence). If $H,K$ are two closed inverse
submonoids of $FIM(X)$ with $H,K \subseteq L(\Gamma,v)$, then the
corresponding immersions $f: \Delta \rightarrow \Gamma$ and $g:
\Delta' \rightarrow \Gamma$ are equivalent iff $H \approx K$ in
$FIM(X)$.

\end{Theorem}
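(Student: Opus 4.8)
The plan is to split the statement into four parts and, wherever possible, to reduce each to the classification over $B_X$ (Theorem \ref{classimmb}) together with the uniqueness of label-preserving immersions (Proposition \ref{uniquemorph}). First I would dispatch the forward direction. Since $f$ preserves edge labels, a circuit $p$ at $v_1$ is carried to a circuit $f(p)$ at $v$ with $\ell(f(p))=\ell(p)$ in $FIM(X)$. Viewing loop monoids as closed inverse submonoids of $FIM(X)$, the assignment $[[p]]\mapsto\ell(p)$ realizes $L(\Delta,v_1)$ and $L(\Gamma,v)$ as subsets of $FIM(X)$ with $L(\Delta,v_1)\subseteq L(\Gamma,v)$; since multiplication of loop classes agrees with multiplication of labels, this inclusion is the desired embedding and injectivity is automatic.

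For the converse (existence) I would take $\Delta$ to be the graph of right $\omega$-cosets of $H$ in $FIM(X)$ with base vertex $v_1=(H)^{\omega}=H$; by Theorem \ref{classimmb} this is the unique graph immersing into $B_X$ with $L(\Delta,v_1)=H$. The technical heart is a readability lemma: a word $w$ labels a path of $\Delta$ starting at $v_1$ if and only if $ww^{-1}\in H$. The nontrivial implication uses that $H$ is closed (upward closed in the natural partial order): if $ww^{-1}\in H$ then for every prefix $w_i$ of $w$ one has $ww^{-1}\le w_iw_i^{-1}$, so $w_iw_i^{-1}\in H$ and every intermediate $\omega$-coset is defined. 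The identical computation with $L(\Gamma,v)$ in place of $H$ shows $w$ labels a path of $\Gamma$ starting at $v$ iff $ww^{-1}\in L(\Gamma,v)$. Since $H\subseteq L(\Gamma,v)$, every word readable from $v_1$ in $\Delta$ is readable from $v$ in $\Gamma$; moreover a circuit $p$ at $v_1$ has $\ell(p)\in H\subseteq L(\Gamma,v)$, so its image is again a circuit. These are precisely the hypotheses of Proposition \ref{uniquemorph}, which yields a unique label-preserving immersion $f:\Delta\to\Gamma$ with $f(v_1)=v$, and label preservation gives $f(L(\Delta,v_1))=L(\Delta,v_1)=H$.

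Uniqueness of $(\Delta,f)$ I would obtain by pushing everything down to $B_X$. If $f':\Delta'\to\Gamma$ with $f'(v_1')=v$ and $f'(L(\Delta',v_1'))=H$ is another solution, then $f_{\Gamma}\circ f'$ and $f_{\Gamma}\circ f$ are immersions of $\Delta'$ and $\Delta$ into $B_X$ realizing the loop monoid $H$; the uniqueness clause of Theorem \ref{classimmb} gives a label-preserving isomorphism $\Delta\cong\Delta'$ sending $v_1\to v_1'$, and Proposition \ref{uniquemorph} (equivalently Proposition \ref{graphisomorphisms}) forces it to intertwine $f$ and $f'$. Uniqueness of $f$ for fixed $\Delta$ is again just Proposition \ref{uniquemorph}.

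The conjugacy clause is where the real work lies. The ``only if'' direction is easy: if an isomorphism $\phi$ satisfies $g\circ\phi=f$, then $f_{\Gamma}g\phi=f_{\Gamma}f$, so the induced immersions into $B_X$ are equivalent and Theorem \ref{classimmb} yields $H\approx K$. For the ``if'' direction I would start from $H\approx K$, use Theorem \ref{classimmb} to get a label-preserving isomorphism $\phi:\Delta\to\Delta'$, and then must upgrade it to an equivalence over $\Gamma$, i.e.\ arrange $g(\phi(v_1))=v$. The guiding computation is this: if $m$ conjugates $H$ into $K$ (so $mm^{-1}\in K$ and $m^{-1}m\in H$), then $m$ labels a path in $\Delta'$ from $v_1'$ to a vertex $u'$ with $L(\Delta',u')=H$, while $g(u')=v\cdot m$. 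Hence I would need to realize the conjugacy by an element of $L(\Gamma,v)$ — equivalently, to locate a preimage $u'\in g^{-1}(v)$ whose loop monoid is exactly $H$ — so that the isomorphism with $\phi(v_1)=u'$ makes $g\circ\phi$ and $f$ agree on $v_1$, and therefore everywhere by Proposition \ref{uniquemorph}. This base-point matching is the step I expect to be the main obstacle: transporting base points within the fibre $g^{-1}(v)$ along labels of circuits at $v$ shows that the loop monoids $L(\Delta',u)$ for $u\in g^{-1}(v)$ are exactly the $L(\Gamma,v)$-conjugates of $K$, so the crux is to see that the $FIM(X)$-conjugacy hypothesis can be witnessed by a conjugator lying in $L(\Gamma,v)$. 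This will require the coset-graph description of $\Gamma$ itself as the $\omega$-cosets of $L(\Gamma,v)$ and careful bookkeeping of how a conjugating element moves the base coset; it is the exact inverse-monoid analogue of the classical fact that equivalence of covers corresponds to conjugacy in the fundamental group of the base.
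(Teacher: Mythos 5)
This theorem is quoted by the paper as Theorem 4.5 of \cite{MM1} and is given no proof here, so there is no internal argument to compare yours against; I can only assess the proposal on its own terms. Your first three parts are correct and are the expected arguments: the embedding follows from label preservation together with the identification of loop monoids with closed inverse submonoids of $FIM(X)$; existence follows by taking $\Delta$ to be the right $\omega$-coset graph of $H$ (consistent with the Remark after the theorem), your readability lemma ($w$ labels a path from the base coset iff $ww^{-1}\in H$, with closedness of $H$ handling the prefixes since $ww^{-1}\le w_iw_i^{-1}$), and Proposition \ref{uniquemorph}; and uniqueness follows by pushing down to $B_X$ and invoking Theorem \ref{classimmb} together with Proposition \ref{uniquemorph}. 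I see no errors in these parts.

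The final clause is where you stop, and the gap you flag is genuine rather than bookkeeping. As you correctly note, an equivalence $\phi$ with $g\circ\phi=f$ forces $\phi(v_1)$ to be a vertex of $g^{-1}(v)$ whose loop monoid is $H$, and the loop monoids of $\Delta'$ over $v$ are precisely the conjugates of $K$ by elements of $L(\Gamma,v)$; so the ``if'' direction requires the conjugacy to be witnessed \emph{inside} $L(\Gamma,v)$. This does not follow from conjugacy in $FIM(X)$. Concretely, take $\Gamma=\Gamma(\{a\})$, the Cayley graph of $\mathbb Z$, with basepoint $v=1$, so that $L(\Gamma,v)=E(FIM(\{a\}))$; let $H=\{1,aa^{-1}\}$ and $K=\{1,a^{-1}a\}$. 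Then $m=a^{-1}$ satisfies $mHm^{-1}\subseteq K$ and $m^{-1}Km\subseteq H$, so $H\approx K$ in $FIM(\{a\})$, but no idempotent conjugates $H$ into $K$, and the two immersions (the segments $[0,1]$ and $[-1,0]$ of the line, each based at $0$) admit no label-preserving isomorphism commuting with the projections to $\Gamma$. So the base-point matching you identify as ``the main obstacle'' cannot in fact be carried out under the literal reading of the statement (over-$\Gamma$ equivalence versus conjugacy in all of $FIM(X)$): either the conjugacy must be taken within $L(\Gamma,v)$, or ``equivalent'' must be interpreted more loosely. You would need to consult the precise formulation and proof in \cite{MM1} to resolve this; as written, your proposal does not (and, I believe, cannot) complete this clause.
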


\noindent {\bf Remark} From the proof of Theorem \ref{classimm} in
\cite{MM1}, it follows that the graph $\Delta$ constructed in this
theorem is the graph  of right $\omega$-cosets of $H$ in $FIM(X)$.

\medskip

Theorems \ref{classimmb} and \ref{classimm} have been  extended to
classify immersions between $2$-dimensional $CW$-complexes and more
generally between finite dimensional $\Delta$-complexes in the
papers by Meakin and Szak\'acs \cite{MeSz} and \cite{MeSz2}. In
these more general cases it is necessary to construct closed inverse
submonoids of certain inverse monoids presented by generators and
relations associated with the complexes.

We close this section with a result about normalizers of closed
inverse submonoids of inverse monoids. Part (a) of  Proposition
\ref{cosets} below was observed in a paper by Lawson, Margolis and
Steinberg (\cite{LMS}, Lemma 2.10),  but as far as we know the other
parts of the proposition are new. This result will be needed in the
construction of the group of deck transformations of a graph
immersion later in this paper.

Let $H$ be a closed inverse submonoid
 of an inverse monoid $M$.  Define the {\it normalizer} $N(H)$ of $H$
 in $M$ to be the set

$$N(H) = \{a \in M : aHa^{-1}, \, a^{-1}Ha \subseteq H\}$$

 \begin{Prop} \label{cosets}

 Let $H$ be a closed inverse submonoid of an inverse monoid $M$. Then
\begin{enumerate}[label=(\alph*)]
 \item $N(H)$ is a closed inverse submonoid of $M$ and $H$ is a full
 inverse submonoid of $N(H)$ (i.e. $H$ contains all of the idempotents of $N(H)$);

 \item If $a \in N(H)$ then the right ${\omega}$-coset $(Ha)^{\omega}$
 and the left ${\omega}$-coset $(aH)^{\omega}$ both exist and
 $(Ha)^{\omega} = (aH)^{\omega}$;


\item The relation $\rho_H$ on $N(H)$ defined by $a \, \rho_H \, c$
iff $(Ha)^{\omega} = (Hc)^{\omega}$ is a congruence on $N(H)$ and
$N(H)/{\rho_H}$ is a group with operation $(Ha)^{\omega} .
(Hb)^{\omega} = (Hab)^{\omega}$.

\end{enumerate}
 \end{Prop}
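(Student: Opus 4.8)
The plan is to treat the three parts in order, since each builds on the previous. For part (a) I would first check the routine submonoid properties: $1 \in N(H)$ because $H$ contains the identity; $N(H)$ is closed under inverses because the two defining conditions $aHa^{-1} \subseteq H$ and $a^{-1}Ha \subseteq H$ simply interchange when $a$ is replaced by $a^{-1}$; and $N(H)$ is closed under products since $(ab)H(ab)^{-1} = a(bHb^{-1})a^{-1} \subseteq aHa^{-1} \subseteq H$, and symmetrically for $(ab)^{-1}H(ab)$. Fullness is then immediate: if $e$ is an idempotent of $N(H)$, then $e = e\cdot 1 \cdot e^{-1} \in eHe^{-1} \subseteq H$ because $1 \in H$. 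The one step needing care is that $N(H)$ is closed in the $\omega$-sense. Here I would take $m \geq n$ with $n \in N(H)$, write $n = nn^{-1}m = em$ with $e = nn^{-1} \in E(N(H)) \subseteq H$ (using the fullness just proved), and invoke the elementary order identity $ege \leq g$ valid for any idempotent $e$ and any $g$. For $h \in H$ this gives $mhm^{-1} \geq e(mhm^{-1})e = nhn^{-1} \in H$, whence $mhm^{-1} \in H^\omega = H$ since $H$ is closed; the mirror computation using $n^{-1} \leq m^{-1}$ handles $m^{-1}Hm \subseteq H$, so $m \in N(H)$.

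For part (b), existence of the two cosets is immediate from $a \in N(H)$: since $1 \in H$ we get $aa^{-1} = a\cdot 1\cdot a^{-1} \in H$ and likewise $a^{-1}a \in H$, and these are precisely the conditions under which $(Ha)^{\omega}$ and $(aH)^{\omega}$ are defined. For the equality I would prove the two inclusions by pushing an $H$-factor across $a$ via the normalizer condition. Given $m \geq ha$ with $h \in H$, the element $h' = a^{-1}ha$ lies in $H$, and $ah' = (aa^{-1})ha \leq ha \leq m$, because $aa^{-1}$ is an idempotent of $H$ and left-multiplication by an idempotent lowers an element in the natural order; hence $m \in (aH)^{\omega}$. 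The reverse inclusion is the symmetric computation using $aha^{-1} \in H$ and the idempotent $a^{-1}a$.

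For part (c), $\rho_H$ is visibly an equivalence relation, so the content is that it is a congruence and that the quotient is a group. I would first record two basic facts: that $a \in (Ha)^{\omega}$ (take the witness $a \geq 1\cdot a$), and that $x \in (Hy)^{\omega}$ implies $(Hx)^{\omega} \subseteq (Hy)^{\omega}$ (multiply the witnessing inequality $x \geq hy$ on the left by an arbitrary element of $H$). Right compatibility of $\rho_H$ is then easy: if $(Ha)^{\omega} = (Hc)^{\omega}$ then $a \geq hc$ for some $h \in H$, so $ab \geq h(cb)$, giving $(Hab)^{\omega} \subseteq (Hcb)^{\omega}$, with the reverse inclusion by symmetry. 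Left compatibility is the main obstacle, and it is exactly where the normalizer hypothesis is indispensable: from $b \geq hd$ with $h \in H$ I would set $k = aha^{-1} \in H$ and verify $k(ad) = ah(a^{-1}a)d \leq ahd \leq ab$, using that $a^{-1}a$ is idempotent and that the natural order is compatible with multiplication on both sides. This yields $(Hab)^{\omega} \subseteq (Had)^{\omega}$ and, symmetrically, equality.

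Once $\rho_H$ is shown to be a congruence, the operation $(Ha)^{\omega}\cdot(Hb)^{\omega} = (Hab)^{\omega}$ is well defined (and $ab \in N(H)$ keeps us inside the valid cosets). It remains to identify the group structure: associativity is inherited from $N(H)$, the class $(H1)^{\omega} = H^{\omega} = H$ is a two-sided identity, and $(Ha^{-1})^{\omega}$ is the inverse of $(Ha)^{\omega}$ because $(Haa^{-1})^{\omega} = H^{\omega} = H$, using $aa^{-1} \in H$ together with the closedness of $H$. Hence $N(H)/\rho_H$ is a group. I expect the left-compatibility step to be the crux, since it is the only place where moving an arbitrary $H$-element through $a$ genuinely requires both defining inclusions of the normalizer; everything else reduces to order manipulations and the closedness of $H$.
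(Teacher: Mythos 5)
Your proof is correct. Parts (a) and (b) run essentially parallel to the paper's argument; your (b) is slightly more direct, pushing $h$ across $a$ via $h' = a^{-1}ha$ and the inequality $aa^{-1}ha \leq ha$, where the paper first establishes the intermediate identity $aHa^{-1} = aa^{-1}Haa^{-1}$ before extracting the same conclusion. Part (c) is where you genuinely diverge. The paper imports the standard fact that $(Ha)^{\omega} = (Hc)^{\omega}$ if and only if $ac^{-1} \in H$ (citing Petrich, Lemma IV.4.5) and then verifies the congruence in a single computation: writing $abd^{-1}c^{-1} = ahc^{-1}$, using part (b) to replace $ah$ by something $\geq h_1a$, and concluding $abd^{-1}c^{-1} \in H$ by closedness. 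You instead work straight from the definition of right $\omega$-cosets: you record that $x \in (Hy)^{\omega}$ forces $(Hx)^{\omega} \subseteq (Hy)^{\omega}$, and then check right and left translation compatibility separately by order manipulations, with the normalizer hypothesis entering only in the left-compatibility step through $k = aha^{-1}$ and the inequality $k(ad) \leq ab$. The paper's route is shorter once the cited coset lemma is granted; yours is self-contained, avoids the external reference, and isolates exactly where normality is indispensable. Both arguments are sound, and your verification of closedness of $N(H)$ via $n = nn^{-1}m$ and the identity $e g e \leq g$ is a legitimate variant of the paper's $a = be$ computation.
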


 \begin{proof}

 (a) It is clear that $a^{-1} \in N(H)$ if $a
 \in N(H)$ and also that $N(H)$ contains the identity of $M$. Also if
 $a,b \in N(H)$ then $(ab)^{-1}H(ab) = b^{-1}a^{-1}Hab \subseteq
 b^{-1}Hb \subseteq H$ and similarly $(ab)H(ab)^{-1} \subseteq H$.
Thus $ab \in N(H)$, so $N(H)$ is an inverse submonoid of $FIM(X)$.
 If $a \in N(H)$ and $b \geq a$ then $a = be$ for some idempotent $e
 \in M$. Hence if $h \in H$ then $beheb^{-1} = aha^{-1} \in H$.
 Hence
 $bhb^{-1} \in H$ since $H$ is closed and $bhb^{-1} \geq beheb^{-1}$.
 Thus $bHb^{-1} \subseteq H$: similarly $b^{-1}Hb \subseteq H$.
 Hence $b \in N(H)$ and so $N(H)$ is a closed inverse submonoid of
 $M$. It is clear that $H \subseteq N(H)$. If $e$ is an idempotent of
 $N(H)$, then $e = e1e^{-1} \in H$ since $1 \in H$ so $H$
 is a full inverse submonoid of $N(H)$.

 (b) If $a \in N(H)$ then $aa^{-1}, \, a^{-1}a \in H$ since $H$ has an identity, so both left
 and right $\omega$-cosets exist. Note  that if $a \in N(H)$ then
 $aHa^{-1} \subseteq H$ and so $a^{-1}aHa^{-1}a \subseteq a^{-1}Ha
 \subseteq H$ so via conjugation by $a$ we get $aHa^{-1} = aa^{-1}a H a^{-1}aa^{-1} \subseteq
 aa^{-1}Haa^{-1} \subseteq aHa^{-1}$. It follows that $aHa^{-1} =
 aa^{-1}Haa^{-1}$ and so $aHa^{-1}a = aa^{-1}Ha$.

 Let $x \in (Ha)^{\omega}$, so $x \geq ha$, for some $h \in H$. Hence $x
 \geq aa^{-1}ha = ah_1a^{-1}a$ for some $h_1 \in H$ by the
 observation above. Since $h_2 = h_1a^{-1}a \in H$, and $x \geq ah_2$
  it follows that $x \in (aH)^{\omega}$. Hence $(Ha)^{\omega}
 \subseteq (aH)^{\omega}$. A similar argument shows the converse
 inclusion.

(c) If $a \in N(H)$ then by part (b), $(Ha)^{\omega}$ exists and
clearly $a \in (Ha)^{\omega}$. This shows that $\rho_H$ is a
reflexive relation on $N(H)$. Since $\rho_H$ is obviously symmetric
and transitive, it is an equivalence relation on $N(H)$.   It is
well known that \omcos{a}=\omcos{c} if and only if $ac^{-1}\in
 H$ (see, for example, \cite{Pet}, Lemma IV.4.5.).
 Suppose $a \, \rho_H \, c$ and $b \, \rho_H \, d$, so
 that \omcos{a}$=$\omcos{c} and \omcos{b}=\omcos{d}. Then
 $ac^{-1}, bd^{-1}\in H$. We have $abd^{-1}c^{-1} \in aHc^{-1}$, so
 $abd^{-1}c^{-1} = ahc^{-1}$, for some $h \in H$. But $ah \in
 (aH)^\omega = (Ha)^\omega$ since $a \in N(H)$, so $ah \geq h_1a$ for
 some $h_1 \in H$. Hence $abd^{-1}c^{-1} \geq h_1ac^{-1} \in H$ and
 so $abd^{-1}c^{-1} \in H$.  Hence \omcos{ab} = \omcos{cd}, i.e. $\rho_H$ is a congruence on $N(H)$.

 The operation of congruence classes in $N(H)/{\rho_H}$ is of course $(Ha)^{\omega}
 .
 (Hb)^{\omega} = (Hab)^{\omega}$.
 Clearly $H$ = \omcos{1} is a right $\omega$-coset. It follows that,
 by definition, \omcos{1}  .   \omcos{a} = \omcos{a} = \omcos{a} .
 \omcos{1}, so $H$ acts as an identity. Also, if $a \in N(H)$, then $a^{-1} \in
 N(H)$ and
\omcos{a} . \omcos{a^{-1}} = \omcos{aa^{-1}} = $H$ since $aa^{-1}
\in H$. Similarly \omcos{a^{-1}} . \omcos{a} = $H$, so
$N(H)/{\rho_H}$ is a group.
\end{proof}

We denote the group $N(H)/{\rho_H}$ by $N(H)/H$ and refer to it as
the {\em group of $\omega$-cosets of $H$ in $N(H)$}.

\section{ The group of deck transformations}

Throughout this section $f : \tilde{\Gamma} \rightarrow \Gamma$ is
an immersion of connected graphs,  $v_0$ is a fixed basepoint in
$\Gamma$, $\tilde{v}_0 \in f^{-1}(v_0)$ is a fixed basepoint in
$\tilde{\Gamma}$ and $H = f(L(\tilde{\Gamma},\tilde{v}_0))$, a
closed inverse submonoid of $L(\Gamma,v_0)$.

A {\em partial isomorphism} of $\tilde{\Gamma}$ is a (labeled graph)
isomorphism $\phi: \Delta_1 \rightarrow \Delta_2$ between subgraphs
$\Delta_1$ and $\Delta_2$ of $\tilde{\Gamma}$ that respects the
immersion, that is $f(\tilde{v}) = f(\phi(\tilde{v}))$ for all
vertices $\tilde{v}$ in $\Delta_1$: it follows that $f(\tilde{e}) =
f(\phi(\tilde{e}))$ for all edges $\tilde{e}$ in $\Delta_1$. It is
convenient to also consider the empty map from $\emptyset$ to
$\emptyset$ as a partial isomorphism: this is denoted by $0$. We
denote the set of partial isomorphisms of $\tilde{\Gamma}$ by
$PI(\tilde{\Gamma})$. Partial isomorphisms may be composed in the
usual way. Namely, if $\phi_1: \Delta_1 \rightarrow \Delta_2$ and
$\phi_2 : \Delta_3 \rightarrow \Delta_4$ are partial isomorphisms,
then $\phi_2 \circ \phi_1$ is the corresponding partial isomorphism
from $\phi_1^{-1}(\Delta_2 \cap \Delta_3)$ to $\phi_2(\Delta_2 \cap
\Delta_3)$ defined by $(\phi_2 \circ \phi_1)(\tilde{e}) =
\phi_2(\phi_1(\tilde{e}))$ for all vertices and edges $\tilde{e} \in
\phi_1^{-1}(\Delta_2 \cap \Delta_3)$.

\begin{Prop}
\label{partialiso}

With respect to the multiplication above, $PI(\tilde{\Gamma})$ is an
inverse monoid. The idempotents of $PI(\tilde{\Gamma})$ are the
identity automorphisms on subgraphs of $\tilde{\Gamma}$ and the
corresponding maximal subgroups are isomorphic to the group of deck
transformations of the subgraph. In particular the group of units of
$PI(\tilde{\Gamma})$ is  the group  $G(\tilde{\Gamma})$ of deck
transformations of $\tilde{\Gamma}$. The natural partial order on
$\tilde{\Gamma}$ is defined by $\phi_1 \leq \phi_2$ if the domain of
$\phi_1$ is a subgraph of the domain of $\phi_2$ and $\phi_1$ is the
restriction of $\phi_2$ to the domain of $\phi_1$.

\end{Prop}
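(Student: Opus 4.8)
The plan is to realize $PI(\tilde{\Gamma})$ as an inverse submonoid of the symmetric inverse monoid $SIM(\tilde{\Gamma}^0 \cup \tilde{\Gamma}^1)$ on the set of vertices and edges of $\tilde{\Gamma}$, and then read off all the remaining assertions from the structure of $SIM(X)$ recalled earlier (idempotents are identity maps on subsets, natural partial order is restriction). First I would check that $PI(\tilde{\Gamma})$ is a submonoid: the identity map on all of $\tilde{\Gamma}$ is a partial isomorphism respecting the immersion and acts as a two-sided identity, composition of partial maps is associative, and the composition defined in the statement agrees with composition in $SIM$. The one substantive point is closure under composition.

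For closure, given $\phi_1 : \Delta_1 \rightarrow \Delta_2$ and $\phi_2 : \Delta_3 \rightarrow \Delta_4$, I would first note that $\Delta_2 \cap \Delta_3$ is a subgraph (intersect vertex sets and edge sets; an edge lying in both has both endpoints in both, so incidence is preserved). Since $\phi_1$ and $\phi_2$ are label-preserving graph isomorphisms, $\phi_1^{-1}(\Delta_2 \cap \Delta_3)$ and $\phi_2(\Delta_2 \cap \Delta_3)$ are subgraphs, and $\phi_2 \circ \phi_1$ restricts to a label-preserving graph isomorphism between them. It respects the immersion because $f((\phi_2\circ\phi_1)(\tilde{e})) = f(\phi_2(\phi_1(\tilde{e}))) = f(\phi_1(\tilde{e})) = f(\tilde{e})$. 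The same identity, applied to $\phi^{-1}$, shows that the set-theoretic inverse of a partial isomorphism is again a partial isomorphism. Hence $PI(\tilde{\Gamma})$ is a submonoid of $SIM(\tilde{\Gamma}^0\cup\tilde{\Gamma}^1)$ closed under inverses, and is therefore an inverse monoid.

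Next I would identify the idempotents and maximal subgroups. An idempotent of $PI(\tilde{\Gamma})$ is in particular an idempotent of $SIM$, hence an identity map on a subset; as its domain is a subgraph, it is the identity automorphism $\mathrm{id}_\Delta$ on some subgraph $\Delta$, and conversely every such $\mathrm{id}_\Delta$ lies in $PI(\tilde{\Gamma})$ and is idempotent. For a partial isomorphism $\phi$ one computes, exactly as in $SIM$, that $\phi\phi^{-1} = \mathrm{id}_{\mathrm{range}(\phi)}$ and $\phi^{-1}\phi = \mathrm{id}_{\mathrm{dom}(\phi)}$. Since the maximal subgroup at an idempotent $e$ of an inverse monoid is its $\mathcal{H}$-class $\{\phi : \phi\phi^{-1} = \phi^{-1}\phi = e\}$, the maximal subgroup at $\mathrm{id}_\Delta$ consists precisely of the label-preserving automorphisms $\phi$ of $\Delta$ with $\mathrm{dom}(\phi) = \mathrm{range}(\phi) = \Delta$ that respect the immersion, i.e. of the deck transformations of $\Delta$. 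Taking $\Delta = \tilde{\Gamma}$, the identity idempotent, exhibits the group of units as $G(\tilde{\Gamma})$.

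Finally, since $PI(\tilde{\Gamma})$ is an inverse submonoid of $SIM$, its natural partial order is the restriction of the natural partial order on $SIM$; as that order is restriction of partial maps, $\phi_1 \leq \phi_2$ holds iff $\phi_1$ is the restriction of $\phi_2$ to $\mathrm{dom}(\phi_1)$, which is automatically a subgraph of $\mathrm{dom}(\phi_2)$, as claimed. The only step requiring any care is the closure argument of the second paragraph, where one must check that intersections and images of the relevant subgraphs remain subgraphs and that labels, incidence and the immersion are all preserved; everything else is a direct translation of the recalled facts about $SIM(X)$ together with standard inverse-monoid theory.
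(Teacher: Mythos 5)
Your proof is correct and takes essentially the same route as the paper, whose own proof simply declares the result ``a standard routine argument similar to the proof of the corresponding properties for $SIM(X)$''; your realization of $PI(\tilde{\Gamma})$ as an inverse submonoid of $SIM(\tilde{\Gamma}^0\cup\tilde{\Gamma}^1)$ closed under inversion is the natural way to make that sketch precise. The points you flag as needing care (intersections and isomorphic images of subgraphs are subgraphs, and the natural partial order on an inverse submonoid is the restriction of that of the ambient monoid) all check out.
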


\begin{proof}
The proof that $PI(\tilde{\Gamma})$ is an inverse semigroup that has
the stated properties is a standard routine argument similar to the
proof of the corresponding properties for  $SIM(X)$. For example,
the identity of $PI(\tilde{\Gamma})$ is clearly the identity
automorphism of $\tilde{\Gamma}$. A partial isomorphism $\phi:
\Delta_1 \rightarrow \Delta_2$ is in the group of units of
$PI(\tilde{\Gamma})$ if and only if $\Delta_1 = \Delta_2 =
\tilde{\Gamma}$, so it follows that the group of units is
$G(\tilde{\Gamma})$.
\end{proof}

If $\Delta$ is a connected subgraph of $\tilde{\Gamma}$ and
$\tilde{v}$ is a vertex of $\Delta$, then we define
$H(\Delta,\tilde{v}) = f(L(\Delta,\tilde{v}))$. By Theorem
\ref{classimm} this is a closed inverse submonoid of $FIM(X)$ that
is contained in $L(\Gamma,v)$ where $v = f(\tilde{v})$. In
particular, if $\tilde{v}_0 \in \Delta$, then $H(\Delta,\tilde{v}_0)
\leq H = H(\tilde{\Gamma},\tilde{v}_0)$.

\begin{Lemma} \label{iso}

Let $\Delta_1$ and $\Delta_2$ be connected subgraphs of
$\tilde{\Gamma}$ containing vertices $\tilde{v}_1$ and $\tilde{v}_2$
respectively. Then there is a  partial isomorphism from $\Delta_1$
onto $\Delta_2$ that maps $\tilde{v}_1$ onto $\tilde{v}_2$ if and
only if $H(\Delta_1,\tilde{v}_1) = H(\Delta_2,\tilde{v}_2)$.

\end{Lemma}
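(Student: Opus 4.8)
The plan is to prove the two implications separately. The forward direction is essentially immediate from label-preservation, while the converse requires the real work, carried out through Proposition \ref{graphisomorphisms}. For the forward direction, suppose $\phi \colon \Delta_1 \to \Delta_2$ is a partial isomorphism with $\phi(\tilde{v}_1) = \tilde{v}_2$. Since $\phi$ is a label-preserving isomorphism onto $\Delta_2$, it restricts to a label-preserving bijection between circuits based at $\tilde{v}_1$ in $\Delta_1$ and circuits based at $\tilde{v}_2$ in $\Delta_2$. Viewing labels as elements of $FIM(X)$ and recalling that $H(\Delta_i,\tilde{v}_i)$ is exactly the set of labels of circuits at $\tilde{v}_i$ in $\Delta_i$ (because $f$ preserves labels), the two label sets coincide, giving $H(\Delta_1,\tilde{v}_1) = H(\Delta_2,\tilde{v}_2)$.

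For the converse, assume $H(\Delta_1,\tilde{v}_1) = H(\Delta_2,\tilde{v}_2)$. I want to build a label-preserving bijection $p \leftrightarrow p'$ between paths $p$ in $\Delta_1$ starting at $\tilde{v}_1$ and paths $p'$ in $\Delta_2$ starting at $\tilde{v}_2$, carrying circuits to circuits, and then invoke Proposition \ref{graphisomorphisms}. The central observation is that for any path $p$ in $\Delta_1$ from $\tilde{v}_1$, the product $pp^{-1}$ is a circuit at $\tilde{v}_1$ lying entirely in $\Delta_1$, so the idempotent $\ell(p)\ell(p)^{-1} = \ell(pp^{-1})$ lies in $H(\Delta_1,\tilde{v}_1) = H(\Delta_2,\tilde{v}_2)$. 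Hence there is a circuit $c$ at $\tilde{v}_2$ in $\Delta_2$ whose label equals $\ell(p)\ell(p)^{-1}$ in $FIM(X)$. By Munn's theorem the subtree of $\tilde{\Gamma}$ traced out by reading $c$ from $\tilde{v}_2$ is the lift at $\tilde{v}_2$ of the Munn tree $MT(\ell(p)\ell(p)^{-1}) = MT(\ell(p))$, and this lift is contained in $\Delta_2$. Since $MT(\ell(p))$ contains the path traced by $\ell(p)$ itself, reading $\ell(p)$ from $\tilde{v}_2$ produces a path $p'$ in $\Delta_2$ with $\ell(p') = \ell(p)$, unique by Proposition \ref{Liftingimmersions}. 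The symmetric construction sends paths in $\Delta_2$ back into $\Delta_1$, and uniqueness of lifts shows the two assignments are mutually inverse, yielding the desired bijection.

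It remains to match the circuit condition and verify that the resulting map is a partial isomorphism. If $p$ is a circuit at $\tilde{v}_1$, then $\ell(p) \in H(\Delta_1,\tilde{v}_1) = H(\Delta_2,\tilde{v}_2)$, so some circuit at $\tilde{v}_2$ in $\Delta_2$ has label $\ell(p)$; by uniqueness of the lift this circuit is $p'$, so $p'$ is a circuit, and the converse is symmetric. Proposition \ref{graphisomorphisms} then produces a label-preserving graph isomorphism $\phi \colon \Delta_1 \to \Delta_2$ with $\phi(\tilde{v}_1) = \tilde{v}_2$. Finally, for the connecting path $p$ to a vertex $u$ of $\Delta_1$ we have $\ell(p') = \ell(p)$ and $f$ preserves labels, so $f(\phi(u)) = f(u)$ for every vertex $u$; thus $\phi$ respects the immersion and is a partial isomorphism in $PI(\tilde{\Gamma})$.

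The hard part is the containment argument in the converse: translating the algebraic fact $\ell(p)\ell(p)^{-1} \in H(\Delta_2,\tilde{v}_2)$ into the geometric conclusion that the actual path labeled $\ell(p)$ lifts \emph{inside} $\Delta_2$, and not merely inside $\tilde{\Gamma}$. This is precisely where the Munn tree description of $FIM(X)$ and the local injectivity of the immersion are essential; everything else is bookkeeping with uniqueness of lifts.
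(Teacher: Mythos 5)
Your proof is correct and follows essentially the same route as the paper's: the forward direction identifies $H(\Delta_i,\tilde{v}_i)$ with the set of labels of circuits at $\tilde{v}_i$ and uses label-preservation, and the converse uses $\ell(p)\ell(p)^{-1}\in H(\Delta_2,\tilde{v}_2)$ to lift each path of $\Delta_1$ into $\Delta_2$ and then applies Proposition \ref{graphisomorphisms}; your Munn-tree digression merely makes explicit a step the paper asserts directly. The only caveat (shared with the paper, whose proof does not verify that the constructed isomorphism respects $f$ at all) is that your final check that $f(\phi(u))=f(u)$ tacitly uses $f(\tilde{v}_1)=f(\tilde{v}_2)$, which is how the lemma is applied in the sequel but is not literally forced by $H(\Delta_1,\tilde{v}_1)=H(\Delta_2,\tilde{v}_2)$ alone.
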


\begin{proof} By Proposition \ref{graphisomorphisms} a
 partial isomorphism $\phi$ from $\Delta_1$
onto $\Delta_2$ that maps $\tilde{v}_1$ onto $\tilde{v}_2$ induces a
label-preserving bijection from paths in $\Delta_1$ that start at
$\tilde{v}_1$ to paths in $\Delta_2$ that start at $\tilde{v}_2$ and
 maps circuits to circuits. This bijection also  maps
$\sim_i$- equivalent paths to $\sim_i$- equivalent paths, so it
induces an isomorphism from $L(\Delta_1,\tilde{v}_1)$ onto
$L(\Delta_2,\tilde{v}_2)$. Also $f(\tilde{v}_1) = f(\tilde{v}_2)$.
Denote this vertex of $\Gamma$ by $v$.  Since
$L(\Delta_i,\tilde{v}_i) \cong H(\Delta_i,\tilde{v}_i)$ by Theorem
\ref{classimm}, this means that there is a labeled graph isomorphism
from $H(\Delta_1,\tilde{v}_1)$ onto $H(\Delta_2,\tilde{v}_2)$ that
fixes $v$. But this implies that $H(\Delta_1,\tilde{v}_1) =
H(\Delta_2,\tilde{v}_2)$ since the labeling of edges in $\Gamma$ is
consistent with an immersion into $B_X$.

Conversely, suppose that $H(\Delta_1,\tilde{v}_1) =
H(\Delta_2,\tilde{v}_2)$. Let $\tilde{q_1}$ be a path in $\Delta_1$
starting at $\tilde{v}_1$ and let $q$ be its projection into
$\Gamma$. Then $\tilde{q_1}\tilde{q}_1^{-1} \in
L(\Delta_1,\tilde{v}_1)$, so $qq^{-1} \in H(\Delta_1,\tilde{v}_1) =
H(\Delta_2,\tilde{v}_2)$, so $qq^{-1}$ lifts to a path
$\tilde{q}_2\tilde{q}_2^{-1}$ with $\tilde{q}_2\tilde{q}_2^{-1} \in
L(\Delta_2,\tilde{v}_2)$. Hence $q$ lifts to the path $\tilde{q}_2$
in $\Delta_2$ starting at $\tilde{v}_2$. Clearly
${\ell}(\tilde{q}_1) = {\ell}(\tilde{q}_2) = {\ell}(q)$. Dually, if
$\tilde{q}_2$ is a path in $\Delta_2$ starting at $\tilde{v}_2$ then
there is a corresponding path $\tilde{q}_1$ in $\Delta_1$ starting
at $\tilde{q}_1$. The correspondence $\tilde{q}_1 \leftrightarrow
\tilde{q}_2$ is one-to-one by equality of labels on these paths.
Since it also maps circuits to circuits, it induces an isomorphism
from $\Delta_1$ onto $\Delta_2$ that maps $\tilde{v}_1$ onto
$\tilde{v}_2$ by Proposition \ref{graphisomorphisms}. \end{proof}

If $v$ is a vertex in the connected graph $\Gamma$ and $H$ is a
closed inverse submonoid of $L(\Gamma,v)$, then we denote by
$N(H,v)$ the normalizer of $H$ in $L(\Gamma,v)$, that is $N(H,v) =
\{p \in L(\Gamma,v) : pHp^{-1},p^{-1}Hp \subseteq H\}$.

\begin{Lemma} \label{normalizerlift2} Let $f : \tilde{\Gamma}
\rightarrow \Gamma$ be an immersion between connected graphs, $v_0$
a vertex of $\Gamma$, $\tilde{v}_0 \in f^{-1}(v_0)$ and $H =
f(L(\tilde{\Gamma},\tilde{v}_0))$. Then

\begin{enumerate}[label=(\alph*)]

\item If there is a deck transformation that maps $\tilde{v}_0$ to
$\tilde{v}_1$, then any path $\tilde{p}$ from $\tilde{v}_0$ to
$\tilde{v}_1$ projects onto a path $p \in N(H,v_0)$.

\item Each path $p \in N(H,v_0)$ lifts to a path $\tilde{p}$ in
$\tilde{\Gamma}$ starting at $\tilde{v}_0$.

\item Each path $p \in N(H,v_0)$ determines a deck
transformation ${\phi}_{p}$ of $\tilde{\Gamma}$ that maps
$\tilde{v}_0$ to ${\omega}(\tilde{p}) \in f^{-1}(v_0)$. Furthermore,
if $q$ is another element of $N(H,\tilde{v}_0)$, then ${\phi}_{p} =
{\phi}_{q}$ iff ${\omega}(\tilde{p}) = {\omega}(\tilde{q})$.

\end{enumerate}
\end{Lemma}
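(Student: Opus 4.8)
The plan is to work throughout with the identification of a path label with the corresponding element of $FIM(X)$, using that $f$ is label-preserving and that, by Theorem~\ref{classimm}, for any vertex $\tilde w$ of $\tilde\Gamma$ the monoid $f(L(\tilde\Gamma,\tilde w))$ is exactly the set of labels of circuits at $\tilde w$, equivalently the stabilizer of $\tilde w$ under the action of $FIM(X)$ on $\tilde\Gamma^0$. For part~(a), given a deck transformation $\gamma$ with $\gamma(\tilde v_0)=\tilde v_1$, I would first note that $\gamma$, being a label-preserving automorphism, carries circuits at $\tilde v_0$ bijectively and label-preservingly onto circuits at $\tilde v_1$, so that $H=f(L(\tilde\Gamma,\tilde v_0))=f(L(\tilde\Gamma,\tilde v_1))=:H_1$. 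Then for any path $\tilde p$ from $\tilde v_0$ to $\tilde v_1$ with $p=f(\tilde p)$, both endpoints lie over $v_0$, so $p$ is a circuit at $v_0$ and $p\in L(\Gamma,v_0)$. The two normalizer containments I would obtain by concatenating genuine paths in $\tilde\Gamma$: lifting $h\in H=H_1$ to a circuit $\tilde c$ at $\tilde v_1$, the circuit $\tilde p\,\tilde c\,\tilde p^{-1}$ at $\tilde v_0$ has label $php^{-1}$, giving $pHp^{-1}\subseteq H$; symmetrically, lifting $h\in H$ to a circuit at $\tilde v_0$ and conjugating by $\tilde p^{-1}$ gives $p^{-1}Hp\subseteq H_1=H$. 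Hence $p\in N(H,v_0)$.

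For part~(b), let $p\in N(H,v_0)$. Since $N(H,v_0)$ is a closed inverse submonoid of $L(\Gamma,v_0)$ in which $H$ is full (Proposition~\ref{cosets}(a)), the idempotent $pp^{-1}$ of $N(H,v_0)$ must lie in $H$. Reading $H$ as the stabilizer of $\tilde v_0$, the element $p$ lifts to a path at $\tilde v_0$ precisely when $\tilde v_0$ belongs to the domain of the action of $p$, that is, when the idempotent $pp^{-1}$ fixes $\tilde v_0$, i.e. when $pp^{-1}\in H$. Since this holds, $p$ lifts; concretely, a circuit at $\tilde v_0$ with label $pp^{-1}$ has the lift $\tilde p$ of $p$ as an initial segment by Proposition~\ref{Liftingimmersions}.

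For part~(c), set $\tilde v_1=\omega(\tilde p)$; as $f(\tilde v_1)=\omega(p)=v_0$ we get $\tilde v_1\in f^{-1}(v_0)$. By Lemma~\ref{iso} (applied with $\Delta_1=\Delta_2=\tilde\Gamma$) there is a deck transformation taking $\tilde v_0$ to $\tilde v_1$ if and only if $H=H_1:=f(L(\tilde\Gamma,\tilde v_1))$, and by Proposition~\ref{uniquemorph} such an automorphism is then unique; this unique map is $\phi_p$. So everything reduces to proving $H=H_1$. The inclusion $H\subseteq H_1$ I would get from the $\omega$-coset calculus: $\tilde v_1$ corresponds to $(Hp)^\omega$, and for $m\in H$ one has $pm\in N(H)$ and $pmp^{-1}\in pHp^{-1}\subseteq H$, so $(Hpm)^\omega=(Hp)^\omega$ by the coset-equality criterion, i.e. $m$ fixes $\tilde v_1$ and $m\in H_1$. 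The reverse inclusion $H_1\subseteq H$ is the main obstacle. For $m\in H_1$, lifting $m$ to a circuit at $\tilde v_1$ and conjugating by $\tilde p$ shows $pmp^{-1}\in H$; then, using $p\in N(H)$, one has $(p^{-1}p)\,m\,(p^{-1}p)=p^{-1}(pmp^{-1})p\in p^{-1}Hp\subseteq H$. The delicate point is that one cannot simply cancel the idempotent $p^{-1}p$; instead, since $(p^{-1}p)\,m\,(p^{-1}p)\le m$ in the natural partial order and $H$ is \emph{closed} ($H=H^\omega$), I can conclude $m\in H$. This absorption of the idempotent $p^{-1}p$ via closedness of $H$ is exactly where the argument must be handled carefully and is the heart of the proof. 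Finally, $\phi_p=\phi_q$ holds iff the two automorphisms agree at $\tilde v_0$, i.e. iff $\omega(\tilde p)=\omega(\tilde q)$, again by the uniqueness in Proposition~\ref{uniquemorph}.
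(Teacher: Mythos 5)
Your proof is correct, and parts (a) and (b) follow the paper's argument almost verbatim: conjugating loop monoids along the path $\tilde{p}$, and lifting the idempotent $pp^{-1}\in H$ to a circuit $\tilde{p}\tilde{p}^{-1}$ at $\tilde{v}_0$. The skeleton of (c) is also the same --- reduce everything to proving $H=H_1$ where $H_1=f(L(\tilde{\Gamma},\tilde{v}_1))$, then invoke Lemma~\ref{iso} and uniqueness from Proposition~\ref{uniquemorph} --- but you establish the two inclusions by different means. The paper proves $H\subseteq H_1$ by lifting $php^{-1}\in H$ to a circuit $\tilde{p}\tilde{h}_1\tilde{p}^{-1}$ at $\tilde{v}_0$ and reading off that $\tilde{h}_1$ is a circuit at $\tilde{v}_1$; you instead identify $\tilde{v}_1$ with the coset $(Hp)^{\omega}$ and apply the criterion $(Ha)^{\omega}=(Hc)^{\omega}\iff ac^{-1}\in H$. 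For the harder inclusion $H_1\subseteq H$, the paper lifts $p^{-1}php^{-1}p\in H$ and argues geometrically that the middle segment $\tilde{h}$ is forced to be a circuit at $\tilde{v}_0$; you work purely algebraically, observing $(p^{-1}p)m(p^{-1}p)=p^{-1}(pmp^{-1})p\in H$ and then absorbing the idempotent $p^{-1}p$ via $(p^{-1}p)m(p^{-1}p)\leq m$ and the closedness $H=H^{\omega}$. Your version has the merit of making explicit exactly where closedness of $H$ is used --- this is hidden inside the paper's path-lifting step (where it amounts to the fact that the stabilizer of $\tilde{v}_0$ is $\omega$-closed) --- while the paper's version stays closer to the geometric picture of lifts. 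Both arguments are sound; the one small point you elide (as does the paper) is that readability of a word from $\tilde{v}_0$ depends only on its Munn tree, which is what lets you pass from ``$pp^{-1}$ lies in the stabilizer of $\tilde{v}_0$'' to ``the specific word $\ell(p)\ell(p)^{-1}$ labels a circuit at $\tilde{v}_0$.''
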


\begin{proof}

(a) Suppose there is a deck transformation that maps $\tilde{v}_0$
to $\tilde{v}_1$ and $\tilde{p}$ is a path from $\tilde{v}_0$ to
$\tilde{v}_1$. Then
$\tilde{p}L(\tilde{\Gamma},\tilde{v}_1)\tilde{p}^{-1} \subseteq
L(\tilde{\Gamma},\tilde{v}_0)$ and
$\tilde{p}^{-1}L(\tilde{\Gamma},\tilde{v}_0)\tilde{p} \subseteq
L(\tilde{\Gamma},\tilde{v}_1)$. Since
$f(L(\tilde{\Gamma},\tilde{v}_1)) = f(L(\tilde{\Gamma},\tilde{v}_0))
= H$ by Lemma \ref{iso}, this implies that $pHp^{-1} \subseteq H$
and $p^{-1}Hp \subseteq H$, so $p \in N(H,v_0)$.

(b) Let $p \in N(H,v_0)$. Since $pp^{-1} \in H$ and every circuit in
$H$ based at $v_0$ lifts to a circuit in $\tilde{\Gamma}$ based at
$\tilde{v}_0$ it follows that $pp^{-1}$ lifts to a circuit which has
the same label as $pp^{-1}$ based at $\tilde{v}_0$. By uniqueness of
a path starting at $\tilde{v}_0$ with a given label,  this circuit
must be of the form $\tilde{p}\tilde{p}^{-1}$ for some lift
$\tilde{p}$ of the path $p$ starting at $\tilde{v}_0$.

(c) Let $p$ and $\tilde{p}$ be as in part (b) above, denote
$\omega(\tilde{p})$ by $\tilde{v}_1$ and let $H_1 =
f(L(\tilde{\Gamma},\tilde{v}_1))$. If $h \in H$, then $php^{-1} \in
H$ since $p \in N(H,v_0)$. This lifts to a circuit
$\tilde{p}\tilde{h}_1\tilde{p}^{-1}$ based at $\tilde{v}_0$ since
every circuit in $H$ lifts to a circuit in $\tilde{\Gamma}$ based at
$\tilde{v}_0$. This forces $\tilde{h}_1$ to be a circuit in
$\tilde{\Gamma}$ based at $\tilde{v}_1$ and with the same label as
$h$, and so $h = f(\tilde{h}_1) \in H_1$. Hence $H \subseteq H_1$.

Conversely, suppose that $h \in H_1$. Then $h$ lifts to a circuit
$\tilde{h}_1$ in $\tilde{\Gamma}$ based at $\tilde{v}_1$. So
$\tilde{p}\tilde{h}_1\tilde{p}^{-1} \in
L(\tilde{\Gamma},\tilde{v}_0)$, and so $php^{-1} \in H$. It follows
that $p^{-1}php^{-1}p \in H$ since $p \in N(H,v_0)$, so this lifts
to a circuit of the form
$\tilde{p}^{-1}\tilde{p}\tilde{h}\tilde{p}^{-1}\tilde{p}$ based at
$\tilde{v}_0$ for some path $\tilde{h}$ with the same label as $h$.
This path $\tilde{h}$ must be a circuit based at $\tilde{v}_0$ and
so $f(\tilde{h}) = h \in H$. Hence $H_1 \subseteq H$ and so $H_1 =
H$. By Lemma \ref{iso}  this implies that there is a deck
transformation $\phi_p$ of $\tilde{\Gamma}$ that maps $\tilde{v}_0$
onto $\tilde{v}_1$. The last statement of part (c) of the lemma is
immediate since automorphisms of an edge-labeled graph are
determined by where they send a point.
\end{proof}

The following theorem provides an analogue for graph immersions
 Theorem
\ref{deckcovers}.

\begin{Theorem} \label{deck}

Let $f : \tilde{\Gamma} \rightarrow \Gamma$ be an immersion of
connected graphs with  $\tilde{v}_0 \in f^{-1}(v_0)$ and $H =
f(L(\tilde{\Gamma},\tilde{v}_0))$. Then $G(\tilde{\Gamma}) \cong
N(H,v_0)/H$.

\end{Theorem}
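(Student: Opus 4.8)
The plan is to build, using Lemma \ref{normalizerlift2}, a surjective monoid homomorphism $\Phi \colon N(H,v_0) \to G(\tilde{\Gamma})$ whose fibres are exactly the classes of the congruence $\rho_H$ from Proposition \ref{cosets}, and then to read off the isomorphism $N(H,v_0)/\rho_H = N(H,v_0)/H \cong G(\tilde{\Gamma})$. I would define $\Phi(p) = \phi_p$, the deck transformation attached to $p$ in Lemma \ref{normalizerlift2}(c); this makes sense because every $p \in N(H,v_0)$ lifts at $\tilde{v}_0$ by Lemma \ref{normalizerlift2}(b), and its endpoint determines a genuine deck transformation by Lemma \ref{normalizerlift2}(c). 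Surjectivity is then immediate from Lemma \ref{normalizerlift2}(a): given $\gamma \in G(\tilde{\Gamma})$, put $\tilde{v}_1 = \gamma(\tilde{v}_0)$ and let $p$ be the projection of a path $\tilde{p}$ from $\tilde{v}_0$ to $\tilde{v}_1$; then $p \in N(H,v_0)$ and $\phi_p$ agrees with $\gamma$ at $\tilde{v}_0$, whence $\phi_p = \gamma$ since a labelled automorphism is determined by the image of one vertex (Proposition \ref{graphisomorphisms}).

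To see that $\Phi$ is a homomorphism I would evaluate both candidate maps at the base vertex. Writing $\tilde{v}_0 \cdot m$ for the result of the natural partial action of $FIM(X)$ on $\tilde{\Gamma}^0$, the lift of $pq$ at $\tilde{v}_0$ is the lift of $p$ followed by the lift of $q$ at $\omega(\tilde{p})$, so $\phi_{pq}(\tilde{v}_0) = \omega(\widetilde{pq}) = (\tilde{v}_0 \cdot p)\cdot q$. On the other hand, since $\phi_p$ is label preserving it carries the lift at $\tilde{v}_0$ of any path to the equally labelled lift at $\tilde{v}_1 = \phi_p(\tilde{v}_0)$, i.e. $\phi_p(\tilde{v}_0 \cdot r) = \tilde{v}_1 \cdot r$; taking $r = q$ gives $\phi_p(\phi_q(\tilde{v}_0)) = \phi_p(\tilde{v}_0 \cdot q) = (\tilde{v}_0 \cdot p)\cdot q$. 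The two values coincide, so $\phi_{pq}$ and $\phi_p \circ \phi_q$ agree at $\tilde{v}_0$ and hence are equal, giving $\Phi(pq) = \Phi(p)\circ\Phi(q)$.

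The heart of the argument is to show $\phi_p = \phi_q$ iff $p \, \rho_H \, q$, equivalently $pq^{-1} \in H$. By Lemma \ref{normalizerlift2}(c) we have $\phi_p = \phi_q$ iff $\omega(\tilde{p}) = \omega(\tilde{q})$, so everything reduces to the equivalence $\omega(\tilde{p}) = \omega(\tilde{q}) \Leftrightarrow pq^{-1} \in H$. Here I would use that, because $f$ preserves labels, $H = f(L(\tilde{\Gamma},\tilde{v}_0))$ is precisely the set of labels of circuits at $\tilde{v}_0$, i.e. the stabilizer of $\tilde{v}_0$ under the partial action of $FIM(X)$ on $\tilde{\Gamma}^0$. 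Since $p$ lifts at $\tilde{v}_0$, the composition rule for a partial action gives $\tilde{v}_0 \cdot (pq^{-1}) = (\tilde{v}_0 \cdot p)\cdot q^{-1} = \omega(\tilde{p})\cdot q^{-1}$, and as $q, q^{-1}$ act as mutually inverse partial injections this equals $\tilde{v}_0$ exactly when $\omega(\tilde{p}) = \tilde{v}_0 \cdot q = \omega(\tilde{q})$. Thus $pq^{-1} \in H \Leftrightarrow \omega(\tilde{p}) = \omega(\tilde{q})$; combining with the standard fact $(Hp)^{\omega} = (Hq)^{\omega} \Leftrightarrow pq^{-1} \in H$ identifies the fibres of $\Phi$ with the $\rho_H$-classes. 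Consequently $\Phi$ factors through an isomorphism $N(H,v_0)/\rho_H = N(H,v_0)/H \cong G(\tilde{\Gamma})$, as required.

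I expect the last equivalence to be the main obstacle, specifically the direction $pq^{-1}\in H \Rightarrow \omega(\tilde{p}) = \omega(\tilde{q})$: because the product $pq^{-1}$ may undergo cancellation in $FIM(X)$, a naive letter-by-letter lift of $\ell(p)\ell(q)^{-1}$ need not visibly factor through $\omega(\tilde{p})$, so it is not quite a matter of ``concatenate and read off''. Phrasing the step through the partial action, where $q$ and $q^{-1}$ are automatically inverse partial injections and $p$ is already known to lift, is what keeps it clean; the alternative of arguing directly with unique lifts of maximal initial segments (Proposition \ref{Liftingimmersions}) and uniqueness of labelled lifts also works but is more delicate to write out.
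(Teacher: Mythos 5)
Your proposal is correct and follows essentially the same route as the paper: the same map $p \mapsto \phi_p$ built from Lemma \ref{normalizerlift2}, the same evaluation-at-$\tilde{v}_0$ argument for the homomorphism property, surjectivity from part (a) of that lemma, and the kernel identified with $\rho_H$ via $\phi_p = \phi_q \Leftrightarrow \omega(\tilde{p}) = \omega(\tilde{q})$. The only cosmetic difference is at the last step, where the paper invokes the identification of $\tilde{\Gamma}$ with the graph of right $\omega$-cosets of $H$, while you rederive the equivalence $\omega(\tilde{p}) = \omega(\tilde{q}) \Leftrightarrow pq^{-1} \in H$ directly from the partial action of $FIM(X)$ on $\tilde{\Gamma}^0$ with stabilizer $H$ --- two phrasings of the same fact from Schein's theory of transitive representations.
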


\begin{proof}
Define $r : N(H,v_0) \rightarrow G(\tilde{\Gamma})$ by $r(p) =
{\phi}_{p}$ in the notation of Lemma \ref{normalizerlift2}. If $p,q
\in N(H,v_0)$, then by Proposition \ref{graphisomorphisms} the deck
transformation $\phi_p$ maps $\omega(\tilde{q})$ to
$\omega(\tilde{q}_1)$ where $\tilde{q}_1$ is a path starting at
$\omega(\tilde{p})$ with $\ell(\tilde{q}_1) = \ell(\tilde{q})$. So
$\phi_{pq}(\tilde{v}_0) = \omega(\tilde{p}\tilde{q}_1) =
\omega(\tilde{q}_1) = \phi_p(\omega(\tilde{q})) =
\phi_p(\phi_q(\tilde{v}_0))$, and so $\phi_{pq} = \phi_p\circ
\phi_q$. Hence $r$ is a homomorphism. It is surjective by Lemma
\ref{normalizerlift2}. Now $\tilde{\Gamma}$ is identified with the
graph of right $\omega$-cosets of $H$ in $FIM(X)$ and under this
identification, a vertex $\tilde{v}$ of $\tilde{\Gamma}$ is
identified with the right $\omega$-coset $(Hp)^{\omega}$ where $p$
lifts to a path $\tilde{p}$ in $\tilde{\Gamma}$ starting at
$\tilde{v}_0$ and ending at $\tilde{v}$. By Lemma
\ref{normalizerlift2}, $\phi_p = \phi_q$ iff $\omega(\tilde{p}) =
\omega(\tilde{q})$, so the kernel of the map $r$ coincides with the
equivalence relation $\rho_H$ that identifies $p$ and $q$ if
$(Hp)^{\omega} = (Hq)^{\omega}$. It follows that $G(\tilde{\Gamma})
\cong N(H,v_0)/H$.
\end{proof}

\section{Covers}

In this section we characterize covers of graphs in terms of the
concepts introduced earlier. A result related to part (a) of the
following theorem was obtained by Meakin and Szak\'acs \cite{MeSz}
in the more general context of immersions between $2$-complexes.

\begin{Theorem} \label{covers}

Let $f : \tilde{\Gamma} \rightarrow \Gamma$ be an immersion of
connected graphs. Choose basepoints $v_0 \in \Gamma$ and
$\tilde{v}_0 \in \tilde{\Gamma}$ such that $\tilde{v}_0 \in
f^{-1}(v_0)$ and let $H = f(L(\tilde{\Gamma},\tilde{v}_0))$. Then

\begin{enumerate}[label=(\alph*)]

\item $\tilde{\Gamma}$ is a cover of $\Gamma$ if and only if $H$ is a
full inverse submonoid of $L(\Gamma,v_0)$.


\item $\tilde{\Gamma}$ is a normal cover of $\Gamma$ if and only if
$N(H,v_0) = L(\Gamma,v_0)$
\item  $\tilde{\Gamma}$ is the universal cover of $\Gamma$ if and only
if $H$ consists of the idempotents in $L(\Gamma,v_0)$, i.e. $H = \{p
\in L(\Gamma,v_0) : \ell(p)$ is a Dyck word $\}$.

\end{enumerate}

\end{Theorem}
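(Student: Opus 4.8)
The plan is to reduce all three parts to one structural fact already recorded in the paper: the idempotents of $FIM(X)$ are exactly the Dyck words, so since $L(\Gamma,v_0)$ is a closed inverse submonoid of $FIM(X)$, its idempotents are precisely the classes $[[p]]$ of circuits $p$ at $v_0$ whose label $\ell(p)$ reduces to $1$ in $FG(X)$. I would first record the auxiliary observation that, because the reduced label of such a $p$ is trivial, \emph{any} lift of $p$ at $\tilde v_0$ automatically returns to $\tilde v_0$ (it folds exactly as $p$ does), so an idempotent $[[p]]$ of $L(\Gamma,v_0)$ lies in $H=f(L(\tilde\Gamma,\tilde v_0))$ if and only if $p$ lifts at $\tilde v_0$. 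For part (a), the forward direction is then immediate from Proposition \ref{pathlifting}: if $f$ is a cover every path at $v_0$ lifts, in particular every Dyck circuit, so every idempotent of $L(\Gamma,v_0)$ lies in $H$ and $H$ is full. For the converse I would assume $H$ full and argue by contradiction that every path $p$ at $v_0$ lifts: if not, Proposition \ref{Liftingimmersions} gives a maximal lifting initial segment $p_1$ with lift ending at $\tilde w$ and a next edge $e$ not lifting at $\tilde w$; then $p_1ee^{-1}p_1^{-1}$ is a Dyck circuit, its class is an idempotent of $L(\Gamma,v_0)$, fullness forces it into $H$, and by the observation it lifts at $\tilde v_0$, forcing $e$ to lift at $\tilde w$ — a contradiction. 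A short connectedness step (prepend a fixed lifted path from $\tilde v_0$ to any $\tilde v$ and invoke uniqueness of label-determined lifts) upgrades this to lifting at every vertex, so $f$ is a cover by Proposition \ref{pathlifting}.

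For part (b) I would first note that $N(H,v_0)=L(\Gamma,v_0)$ already forces a cover: by Proposition \ref{cosets}(a), $H$ contains all idempotents of $N(H,v_0)=L(\Gamma,v_0)$, hence $H$ is full and part (a) applies. Granting this, I would prove the equivalence using the deck-transformation results of Section~4. If $f$ is a normal cover, then for each $p\in L(\Gamma,v_0)$ its lift $\tilde p$ ends at a vertex of $f^{-1}(v_0)$ reachable from $\tilde v_0$ by a deck transformation, so Lemma \ref{normalizerlift2}(a) yields $p\in N(H,v_0)$; thus $L(\Gamma,v_0)\subseteq N(H,v_0)$ and equality holds. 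Conversely, assuming $N(H,v_0)=L(\Gamma,v_0)$, for $\tilde v_1,\tilde v_2\in f^{-1}(v_0)$ I would lift circuits $p,q\in L(\Gamma,v_0)$ with $\omega(\tilde p)=\tilde v_1$, $\omega(\tilde q)=\tilde v_2$ and form $\phi_q\circ\phi_p^{-1}$, a deck transformation carrying $\tilde v_1$ to $\tilde v_2$ by Lemma \ref{normalizerlift2}(c). Transitivity on an arbitrary fibre $f^{-1}(v)$ then follows by choosing a path $t$ from $v$ to $v_0$, lifting it at $\tilde v_1$ and $\tilde v_2$ into $f^{-1}(v_0)$, applying the $v_0$-case, and using uniqueness of lifts to verify the deck transformation realizes $\tilde v_1\mapsto\tilde v_2$; hence $f$ is a normal cover.

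For part (c), if $\tilde\Gamma$ is the universal cover then it is a tree, so every circuit at $\tilde v_0$ has trivial reduced label and $L(\tilde\Gamma,\tilde v_0)$ consists entirely of idempotents; therefore $H$ consists of idempotents of $L(\Gamma,v_0)$, while fullness from part (a) gives the reverse containment, so $H$ is exactly the set of idempotents. Conversely, if $H$ equals the set of idempotents of $L(\Gamma,v_0)$, then $H$ is full, so $f$ is a cover by (a); moreover $f$ restricts to an isomorphism of $L(\tilde\Gamma,\tilde v_0)$ onto $H$ by Theorem \ref{classimm}, so $L(\tilde\Gamma,\tilde v_0)$ is a semilattice and $\pi_1(\tilde\Gamma,\tilde v_0)$ is trivial; by Proposition \ref{fundamentalgroup}(a) a connected graph of rank $0$ is a tree, so $\tilde\Gamma$ is the universal cover.

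The hard part will be the converse half of part (a): converting the purely algebraic hypothesis that $H$ be full into the geometric path-lifting property. The crux is precisely the correspondence between idempotents of $L(\Gamma,v_0)$ and Dyck circuits (equivalently, Munn trees embeddable in $\tilde\Gamma$ at $\tilde v_0$), and the $p_1ee^{-1}p_1^{-1}$ device, which isolates the obstruction to lifting a single edge. The only other delicate point I anticipate is upgrading fibre-transitivity from $f^{-1}(v_0)$ to an arbitrary fibre in part (b), which rests entirely on the uniqueness of label-determined lifts.
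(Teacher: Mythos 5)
Your proof is correct and follows essentially the same route as the paper's: part (a) hinges on the idempotents of $L(\Gamma,v_0)$ being Dyck-labelled circuits whose lifts are forced to close up, part (b) combines Lemma \ref{normalizerlift2} with Proposition \ref{cosets}(a) exactly as the paper does, and part (c) reduces to the tree criterion. The only differences are organizational — your contradiction via a maximal lifting initial segment and the device $p_1ee^{-1}p_1^{-1}$ versus the paper's direct lift of $qpp^{-1}q^{-1}$, and your explicit treatment of transitivity on arbitrary fibres in (b), which the paper leaves implicit.
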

\begin{proof}

(a) Suppose that $\tilde{\Gamma}$ is a cover of graphs. Then every
path $p$ in $\Gamma$ that starts at $v_0$ lifts to a (unique) path
$\tilde{p}$ starting at $\tilde{v}_0$ by Proposition
\ref{pathlifting}. In particular, if $e$ is an idempotent of
$L(\Gamma,v_0)$, then $\ell(e)$ is a Dyck word since the only
idempotents in $FIC(\tilde{\Gamma})$ are Dyck words, so
$\ell(\tilde{e})$ is a path starting at $\tilde{v}_0$ whose label is
a Dyck word. This forces $\tilde{e}$ to be a circuit at
$\tilde{v}_0$, so $e = f(\tilde{e}) \in H$. Hence $H$ is full in
$L(\Gamma,v_0)$. Conversely, suppose that $H$ is full in
$L(\Gamma,v_0)$. Let $v_1$ be any vertex in $\Gamma$ and
$\tilde{v}_1$  any vertex in $f^{-1}(v_1)$ and let $p$ be a path in
$\Gamma$ starting at $v_1$. There is a path $\tilde{q}$ in
$\tilde{\Gamma}$ from $\tilde{v}_0$ to $\tilde{v}_1$ and the
projection of this path is a path $q$ in $\Gamma$ from $v_0$ to
$v_1$. Then $qpp^{-1}q^{-1}$ is an idempotent in $L(\Gamma,v_0)$ so
it  is in $H$ and hence it lifts to a path
$\tilde{q}_1\tilde{p}_1\tilde{p}_1^{-1}\tilde{q}_1^{-1}$ in
$\tilde{\Gamma}$ starting at $\tilde{v}_0$. Since $\ell(\tilde{q}_1)
= \ell(q) = \ell(\tilde{q})$ we must have $\tilde{q}_1 = \tilde{q}$.
Hence $\omega(\tilde{q}_1) = \tilde{v}_1$ and so $\tilde{p}_1$ is a
lift of $p$ that starts at $\tilde{v}_1$. Hence all paths in
$\Gamma$ lift everywhere, so $\tilde{\Gamma}$ is a cover of $\Gamma$
by Proposition \ref{pathlifting}.

(b) Suppose that $\tilde{\Gamma}$ is a normal cover of $\Gamma$ and
$v_0 \in \Gamma$. Then for all vertices $\tilde{v}_0,\tilde{v}_1 \in
f^{-1}(v_0)$ there is a (unique) deck transformation $\phi$ mapping
$\tilde{v}_0$ onto $\tilde{v}_1$. If $p \in L(\Gamma,v_0)$ then
$pp^{-1} $ is an idempotent in $L(\Gamma,v_0)$ so $pp^{-1} \in H$ by
part (a). Hence $pp^{-1}$ lifts to a path $\tilde{p}\tilde{p}^{-1}$
at $\tilde{v}_0$ and so $p$ lifts to the path $\tilde{p}$ from
$\tilde{v}_0$ to some vertex $\tilde{v}_1$. Since there is a deck
transformation $\phi$ that maps $\tilde{v}_0$ onto $\tilde{v}_1$,
part (a) of Lemma \ref{normalizerlift2} implies that $p \in
N(H,v_0)$.  Hence $L(\Gamma,v_0) \subseteq N(H,v_0)$ and so
$L(\Gamma,v_0) = N(H,v_0)$.

Conversely, suppose that $L(\Gamma,v_0) = N(H,v_0)$ and $e$ is an
idempotent in $L(\Gamma,v_0)$. Then $e \in N(H,v_0)$. But $H$ is
full in $N(H,v_0)$ by Proposition \ref{cosets}, so $e \in H$. Hence
$\tilde{\Gamma}$ is a cover of $\Gamma$ by part (a). Now let
$\tilde{v}_0, \tilde{v}_1 \in f^{-1}(v_0)$. There is some path
$\tilde{r}$ from $\tilde{v}_0$ to $\tilde{v}_1$ that projects to a
path $r \in L(\Gamma,v_0) = N(\Gamma,v_0)$. So by Lemma
\ref{normalizerlift2} there is a deck transformation that maps
$\tilde{v}_0$ to $\tilde{v}_1$, and so $\tilde{\Gamma}$ is a normal
cover of $\Gamma$.

(c) Suppose  that $\tilde{\Gamma}$ is the universal cover of
$\Gamma$. Then $\tilde{\Gamma}$ is a tree, so the label of any
circuit $\tilde{p}$ in $\tilde{\Gamma}$ based at any point is a Dyck
word. Such circuits project onto circuits in $\Gamma$ whose label is
also a Dyck word, so $H$ consists just of idempotents in
$L(\Gamma,v_0)$. On the other hand, if $e$ is an idempotent in
$L(\Gamma,v_0)$ then $\ell(e)$ is a Dyck word (since any idempotent
in $FIC(\Gamma)$ is a path whose label is a Dyck word). Since $e$
lifts to a path $\tilde{e}$ based at $\tilde{v}_0$ and since
$\ell(\tilde{e}) $ is a Dyck word, $\tilde{e}$ is a circuit based at
$\tilde{v}_0$, so $f(\tilde{e}) = e \in H$. Hence $H$ consists
exactly of all the Dyck words in $L(\Gamma,v_0)$. Conversely, if $H$
consists  of these idempotents, then $H$ is full in $L(\Gamma,v_0)$
so $\tilde{\Gamma}$ is a cover of $\Gamma$ by part (a). By
definition of $H$, any circuit in $\tilde{\Gamma}$ based at
$\tilde{v}_0$ projects onto a circuit in $H$ based at $v_0$, so its
label must be a Dyck word. It follows that $\tilde{\Gamma}$ is a
tree. Hence $\tilde{\Gamma}$ is the universal cover of $\Gamma$.
\end{proof}

\section {Actions of groups on graphs}

By an {\em action} of a group $G$ on a graph $\Gamma$ we mean a
homomorphism $\phi: G \rightarrow Aut(\Gamma)$ of $G$ into the group
of (labeled graph) automorphisms of $\Gamma$. Here we adopt the
convention that the action is a left action and denote the image
under $\phi(g)$ of the vertex $v$ [or edge $e$] by $g.v$ [resp.
$g.e$]. If $f : \tilde{\Gamma} \rightarrow \Gamma$ is an immersion
between connected graphs then there is an obvious action of the
group $G(\tilde{\Gamma})$ on $\tilde{\Gamma}$. The following fact is
well-known for {\em covers} of connected graphs, but also holds for
immersions (with essentially the same proof).

\begin{Prop}
\label{actions}

If $f : \tilde{\Gamma} \rightarrow \Gamma$ is an immersion of
connected graphs with group $G = G(\tilde{\Gamma})$ of deck
transformations, then the quotient map $g : \tilde{\Gamma}
\rightarrow \tilde{\Gamma}/{G}$ is a normal covering and $G$ is the
group of deck transformations of this cover.

\end{Prop}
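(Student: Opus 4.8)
The plan is to verify directly that the quotient map $g : \tilde{\Gamma} \rightarrow \tilde{\Gamma}/G$ is a normal covering and that $G$ acts as its full group of deck transformations. First I would make the quotient graph precise: the vertices of $\tilde{\Gamma}/G$ are the $G$-orbits of vertices of $\tilde{\Gamma}$, the edges are the $G$-orbits of edges, and incidence and orientation descend because each element of $G$ is a label-preserving graph automorphism. The map $g$ sends each vertex or edge to its orbit. Since deck transformations satisfy $f = f \circ \gamma$ for all $\gamma \in G$, the immersion $f$ is constant on $G$-orbits, so $f$ factors through $g$; this gives a well-defined label-preserving map $\bar{f} : \tilde{\Gamma}/G \rightarrow \Gamma$ with $f = \bar{f} \circ g$, and it shows $\tilde{\Gamma}/G$ is connected and edge-labeled over $X \cup X^{-1}$.

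Next I would show that $g$ is a covering map. The key point is that $G$ acts \emph{freely} on the vertices and edges of $\tilde{\Gamma}$: if $\gamma(\tilde{v}) = \tilde{v}$ for some deck transformation $\gamma$ and some vertex $\tilde{v}$, then $\gamma$ fixes a vertex and respects the immersion, so by Proposition \ref{uniquemorph} (automorphisms of a connected edge-labeled graph are determined by the image of a single vertex) we get $\gamma = \mathrm{id}$. Freeness guarantees that $g$ restricts to a bijection on each star set: for a vertex $\tilde{v}$ of $\tilde{\Gamma}$, distinct edges in $star(\tilde{\Gamma}, \tilde{v})$ carry distinct labels, and no nontrivial element of $G$ can identify two edges at $\tilde{v}$ without fixing $\tilde{v}$. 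Hence $g_{\tilde{v}}$ is injective, and it is surjective onto $star(\tilde{\Gamma}/G, g(\tilde{v}))$ because every edge of the quotient lifts to an edge at some $G$-translate of $\tilde{v}$, which can be moved back to $\tilde{v}$. So each $g_{\tilde{v}}$ is a bijection and $g$ is a cover.

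To see that $g$ is a \emph{normal} cover with deck group exactly $G$, I would use the characterization of normality via transitivity of deck transformations on fibers. Two vertices of $\tilde{\Gamma}$ lie in the same fiber $g^{-1}(\text{orbit})$ precisely when they are in the same $G$-orbit, i.e. when some $\gamma \in G$ carries one to the other. Thus $G$ acts transitively on each fiber of $g$, so by the definition of normal cover given in Section 2, $g$ is normal. Finally, every element of $G$ is a deck transformation of $g$ since $g \circ \gamma = g$ by construction; conversely, any deck transformation $\delta$ of $g$ fixes each fiber setwise, hence maps $\tilde{v}_0$ to some $\gamma(\tilde{v}_0)$ with $\gamma \in G$, and then $\gamma^{-1}\delta$ fixes $\tilde{v}_0$ and respects $g$, so $\gamma^{-1}\delta = \mathrm{id}$ by the rigidity of connected edge-labeled graph morphisms, giving $\delta = \gamma \in G$.

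The main obstacle I expect is the careful verification that $G$ acts freely, since this is what makes $g$ locally injective and underlies both the covering and the deck-group claims; everything else is a routine transcription of the orbit-space construction. The freeness rests entirely on the fact that a label-preserving automorphism of a connected graph fixing one vertex is the identity (Proposition \ref{uniquemorph}), so I would be sure to invoke that explicitly rather than treat freeness as automatic.
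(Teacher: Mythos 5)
Your argument is correct, and its decisive step is the same one the paper uses: the action of $G$ on $\tilde{\Gamma}$ is free because a label-preserving automorphism of a connected edge-labeled graph is determined by the image of a single vertex (Proposition \ref{uniquemorph}), so a nontrivial deck transformation fixes no vertex and no edge. Where the two proofs part ways is in what happens after freeness is established: the paper simply observes that freeness gives condition (*) on page 72 of Hatcher and then invokes Hatcher's Proposition 1.40 to conclude that $g$ is a normal covering with deck group $G$, whereas you carry out that verification by hand in the combinatorial setting --- checking that each $g_{\tilde{v}}$ is a bijection on stars, that $G$ acts transitively on fibers, and that rigidity forces every deck transformation of $g$ to lie in $G$. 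Your route is self-contained and avoids importing a topological black box, at the cost of some routine bookkeeping; the paper's is shorter but leans on the reader translating a statement about covering space actions into the graph category. One small point your write-up glosses over (as does the paper) is that $\tilde{\Gamma}/G$ is a legitimate graph in the sense of Serre, i.e.\ no edge gets identified with its own inverse in the quotient; this is immediate because elements of $G$ preserve orientation (and labels), so a positively oriented edge cannot be sent to a negatively oriented one, but it is worth saying explicitly since the quotient construction is where it could fail.
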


\begin{proof}

If $\gamma$ is a deck transformation of the immersion $f$ then by
Proposition \ref{uniquemorph}, $\gamma$ is uniquely determined by
the image of any vertex in $\Gamma$, and it follows that if $v$ is a
vertex and $e$ is a  edge of $\Gamma$ and $\gamma$ is not the
identity automorphism of $\Gamma$, then $\gamma(v) \neq v$ and
$\gamma(e) \neq e$. From this it follows  that the action of $G$ on
$\tilde{\Gamma}$ satisfies condition (*) on page 72 of Hatcher
\cite{hatch} and so the result follows by Proposition 1.40 of
\cite{hatch}.
\end{proof}

\noindent {\bf Example } The graph $\tilde{\Gamma}$ in Figure 3 has
four vertices $v_1,v_2,v_3$ and $v_4$ and six positively labeled
edges. It immerses in the obvious way into the bouquet $B_{\{a,b\}}$
of two circles via the map $f$ that preserves edge labels.

 \begin{center}
\begin{tikzpicture}
    \begin{pgfonlayer}{nodelayer}
        \node [style=rn] (0) at (-1, -1) {$w_1$};
        \node [style=rn] (2) at (1, -1) {$w_2$};
        \node [style=rn] (3) at (-3, 5) {$v_1$};
        \node [style=rn] (4) at (-1, 5) {$v_2$};
        \node [style=rn] (6) at (1, 5) {$v_3$};
        \node [style=rn] (7) at (3, 5) {$v_4$};
        \node [style=none] (8) at (-5,-1) {$\Gamma$};
        \node [style=none] (9) at (-5,5) {$\tilde{\Gamma}$};
        \node [style=none] (10) at (0, 2) {};
        \node [style=none] (11) at (0, 0.25) {};

    \end{pgfonlayer}
    \begin{pgfonlayer}{edgelayer}
        \draw [style=red, in=135, out=-135, loop] (0) to node[auto, left]{a} ();
        \draw [style=red, in=45, out=-45, loop] (2) to node[auto, right]{a} ();
        \draw [style=red, bend right=45, looseness=1.50] (6) to node[auto,above]{a} (4);
        \draw [style=red, bend right=45, looseness=1.50] (4) to node[auto,below]{a} (6);
        \draw [style=red, bend right=45, looseness=1.50] (7) to node[auto,above]{a} (3);
        \draw [style=red, in=-135, out=-45, looseness=1.50] (3) to node[auto,below]{a} (7);
        \draw [style=blue] (3) to node[auto]{b} (4);
        \draw [style=blue] (7) to node[auto,above]{b} (6);
        \draw [style=blue] (0) to node[auto]{b} (2);
        \draw [style=black] (10) to node[auto]{g} (11);

    \end{pgfonlayer}
\end{tikzpicture}
\end{center}

\begin{center}
Figure 3
\end{center}

Let $e_1$ be the directed edge from $v_1$ to $v_4$ with label $a$;
$e_2$  the directed edge from $v_2$ to $v_3$ with label $a$; $e_3$
 the directed edge from $v_3$ to $v_2$ with label $a$; $e_4$
the directed edge from $v_4$ to $v_1$ with label $a$; $e_5$  the
directed edge from $v_1$ to $v_2$ with label $b$; and $ e_6$  the
directed edge from $v_4$ to $v_3$ with label $b$.

 The stabilizer
of the vertex $v_1$ under the action by $FIM(a,b)$ is the closed
inverse submonoid $H$ of $FIM(a,b)$ generated by the elements
$ba^2b^{-1}, aba^{-1}b^{-1}$ and $bab^{-1}a$. The graph
$\tilde{\Gamma}$ is the graph of right $\omega$-cosets of $H$ in
$FIM(a,b)$. The distinct right $\omega$-cosets of $H$ are of course
$H,(Hb)^{\omega}, (Hba)^{\omega}$ and $(Ha)^{\omega}$, and they may
be identified with the four vertices $v_1,v_2,v_3$ and $v_4$
respectively.

The map that interchanges  $e_1$ and $e_4$, interchanges $e_2$ and
$e_3$, and interchanges $e_5$ and $e_6$ defines a deck
transformation $\gamma$ of the immersion $f$ that interchanges $v_1$
with $v_4$ and $v_2$ with $v_3$. This is the only non-trivial deck
transformation, so the group $G = G(\tilde{\Gamma})$ is isomorphic
to the cyclic group ${\mathbb Z}_2$ of order $2$. Note that the only
right $\omega$-coset of $H$ that lies in $N(H)$ is $(Ha)^{\omega}$,
so $N(H)/H = \{H,(Ha)^{\omega}\}$ is isomorphic to ${\mathbb Z}_2$
in accord with Theorem \ref{deck}.

The group $G = {\mathbb Z}_2$ acts on $\tilde{\Gamma}$ in the
obvious way and the quotient graph $\Gamma = \tilde{\Gamma}/G$ is
the graph with two vertices $w_1,w_2$ and one positively oriented
edge from $w_1$ to $w_2$ with label $b$, as depicted in Figure 3.
Clearly the map $g$ from $\tilde{\Gamma}$ to $\Gamma$ that preserves
edge labels is a normal cover of $\Gamma = \tilde{\Gamma}/G$, in
accord with Proposition \ref{actions}. \QED

\section{Extending immersions to covers}

 \begin{Theorem}
 \label{restrictions}
 Let $f : \tilde{\Gamma} \rightarrow \Gamma$ be an immersion of
  connected graphs. Then there is a graph cover $g : \tilde{\Delta}
 \rightarrow \Gamma$ such that

 (a)  $\tilde{\Gamma}$ is a subgraph of
 $\tilde{\Delta}$ and $f$ is the restriction of $g$ to
 $\tilde{\Gamma}$; and

 (b) any deck transformation of $\tilde{\Gamma}$ is the restriction
 of some deck transformation of $\tilde{\Delta}$.

 \end{Theorem}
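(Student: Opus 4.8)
The plan is to realize $\tilde{\Delta}$ as the cover of $\Gamma$ corresponding to a suitable full closed inverse submonoid $K$ of $L(\Gamma,v_0)$ containing $H$, and then to exhibit the inclusion $\tilde{\Gamma}\subseteq\tilde{\Delta}$ explicitly on the level of $\omega$-cosets. Let $\sigma:FIM(X)\to FG(X)$ be the maximal group image homomorphism, so that $\sigma(w)=r(w)$ is the reduced form of $w$ and the idempotents of $FIM(X)$ are exactly the elements with $\sigma(w)=1$. Put $G_H=\sigma(H)$, a subgroup of $FG(X)$, and define
$$K=\{m\in L(\Gamma,v_0):\sigma(m)\in G_H\}.$$
First I would verify that $K$ is a full closed inverse submonoid of $L(\Gamma,v_0)$ with $H\subseteq K$: it is the preimage of a subgroup under a homomorphism, hence an inverse submonoid; it contains every idempotent of $L(\Gamma,v_0)$ since those map to $1\in G_H$; and it is closed because $k\leq k'$ forces $\sigma(k)=\sigma(k')$. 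By Theorem \ref{classimm} there is an immersion $g:\tilde{\Delta}\to\Gamma$ with basepoint $\tilde{w}_0\in g^{-1}(v_0)$ and $g(L(\tilde{\Delta},\tilde{w}_0))=K$, and since $K$ is full in $L(\Gamma,v_0)$, Theorem \ref{covers}(a) shows $g$ is a cover. I identify $\tilde{\Gamma}$ and $\tilde{\Delta}$ with the graphs of right $\omega$-cosets of $H$ and of $K$ respectively.

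For part (a) I would define $\iota:\tilde{\Gamma}\to\tilde{\Delta}$ on vertices by $\iota((Hm)^{\omega})=(Km)^{\omega}$, and correspondingly on edges. As $H\subseteq K$, the map $\iota$ is well defined and visibly commutes with the edge-labelled projections, so $g\circ\iota=f$; thus once $\iota$ is shown to be an injective graph morphism, $\tilde{\Gamma}$ is a subgraph of $\tilde{\Delta}$ on which $g$ restricts to $f$. The crux, and the step I expect to be the main obstacle, is injectivity. Here I would combine the standard fact that $(Hm)^{\omega}=(Hn)^{\omega}$ iff $mn^{-1}\in H$ (\cite{Pet}, Lemma IV.4.5) with Munn's description: an element of $FIM(X)$ is determined by the pair $(MT(\cdot),r(\cdot))$, and $u\leq w$ iff $r(u)=r(w)$ and $MT(w)\subseteq MT(u)$. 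Given vertices with $mm^{-1},nn^{-1}\in H$ and $(Km)^{\omega}=(Kn)^{\omega}$, i.e. $mn^{-1}\in K$, I pick $h_0\in H$ with $\sigma(h_0)=\sigma(mn^{-1})=:g$ and set $h=(mm^{-1})h_0(nn^{-1})\in H$. A direct Munn-tree computation then gives $\sigma(h)=g$ and $MT(h)=MT(m)\cup MT(h_0)\cup g\cdot MT(n)\supseteq MT(m)\cup g\cdot MT(n)=MT(mn^{-1})$, whence $h\leq mn^{-1}$. Since $H$ is closed, $mn^{-1}\in H$, so $(Hm)^{\omega}=(Hn)^{\omega}$, proving $\iota$ injective on vertices; injectivity on edges is automatic from label consistency.

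For part (b) I would first show $N(H,v_0)\subseteq N(K,v_0)$: applying $\sigma$ to $pHp^{-1}\subseteq H$ gives $\sigma(p)G_H\sigma(p)^{-1}\subseteq G_H$, and since $\sigma(pkp^{-1})=\sigma(p)\sigma(k)\sigma(p)^{-1}$ with $pkp^{-1}\in L(\Gamma,v_0)$, this yields $pKp^{-1}\subseteq K$, and dually $p^{-1}Kp\subseteq K$. By Theorem \ref{deck} every deck transformation of $\tilde{\Gamma}$ is the transformation $\phi_p$ attached to some $p\in N(H,v_0)$ via Lemma \ref{normalizerlift2}, acting on cosets by $\phi_p((Hm)^{\omega})=(Hpm)^{\omega}$. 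As $p\in N(K,v_0)$, the same lemma applied to the cover $g$ produces a deck transformation $\psi_p$ of $\tilde{\Delta}$ with $\psi_p((Km)^{\omega})=(Kpm)^{\omega}$. The identity $\psi_p\circ\iota=\iota\circ\phi_p$ then holds by inspection of the coset formulas, so $\psi_p$ preserves the subgraph $\iota(\tilde{\Gamma})=\tilde{\Gamma}$ and restricts to $\phi_p$ on it. Hence every deck transformation of $\tilde{\Gamma}$ is the restriction of a deck transformation of $\tilde{\Delta}$, which completes (b).
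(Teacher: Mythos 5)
Your proof is correct, but it takes a genuinely different route from the paper's. The paper argues combinatorially: it completes $\tilde{\Gamma}$ to a cover by repeatedly attaching new edges at incomplete vertices (so that in the limit $\tilde{\Delta}\setminus\tilde{\Gamma}$ is a forest), and then extends a deck transformation by hand, factoring an arbitrary path of $\tilde{\Delta}$ into segments in $\tilde{\Gamma}$ interleaved with tree excursions and invoking Proposition \ref{graphisomorphisms}. You instead characterize the completion algebraically as the coset graph of $K=\{m\in L(\Gamma,v_0):\sigma(m)\in\sigma(H)\}$; your verification that $K$ is a full closed inverse submonoid containing $H$ is fine (closedness needs both that $\sigma$ is constant on principal order filters and that $L(\Gamma,v_0)$ itself is closed in $FIM(X)$ -- say the latter explicitly). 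The real content of your part (a) is the injectivity of $\iota$, and the Munn-tree computation does work: with $h=(mm^{-1})h_0(nn^{-1})\in H$ one gets $r(h)=r(mn^{-1})$ and $MT(h)\supseteq MT(mn^{-1})$, hence $h\leq mn^{-1}$ and $mn^{-1}\in H$ by closedness; you should record the order criterion ($u\leq w$ iff $r(u)=r(w)$ and $MT(u)\supseteq MT(w)$), which the paper never states but which is standard. Part (b) is clean: $N(H,v_0)\subseteq N(K,v_0)$ follows by applying $\sigma$, and the coset formulas give $\psi_p\circ\iota=\iota\circ\phi_p$. As for what each approach buys: the paper's construction makes the geometry of the added part transparent and needs no coset machinery, while yours identifies the cover intrinsically (in fact the two covers coincide: the loop monoid of the paper's $\tilde{\Delta}$ is full and contains $H$, hence contains each $m\in K$ since $m\geq (mh^{-1})h$ with $mh^{-1}$ idempotent) and shows, slightly more than is asserted, that $\phi_p\mapsto\psi_p$ is an injective group homomorphism from $G(\tilde{\Gamma})$ into $G(\tilde{\Delta})$.
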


 \begin{proof}

 (a) If $f : \tilde{\Gamma} \rightarrow \Gamma$ is  a cover of graphs
there is nothing to prove, so assume that this is not the case. Let
the edges of $\Gamma$ and $\tilde{\Gamma}$ be labeled over a set $X
\cup X^{-1}$ consistent with an immersion into $B_X$ as usual. Then
there is a vertex  $\tilde{v}$ of $\tilde{\Gamma}$ for which there
is an edge $e$ in $\Gamma$ with $f(\tilde{v}) = \alpha(e)$ such that
$e$ does not lift to any edge in $\tilde{\Gamma}$ starting at
$\tilde{v}$. If $\ell(e) = x \in X \cup X^{-1}$ then there is no
edge with label $x$ starting at $\tilde{v}$. We refer to such a
vertex as an {\em incomplete vertex} of $\tilde{\Gamma}$ and say
that $\tilde{v}$ is {\em missing an edge labeled by} $x$. Enlarge
the graph $\tilde{\Gamma}$ by adding a new vertex $\tilde{v}_x$ and
a new edge $\tilde{e}_x$ from $\tilde{v}$ to $\tilde{v}_x$ for each
incomplete vertex $\tilde{v}$ of $\tilde{\Gamma}$ that is missing an
edge labeled by $x$. Since distinct edges starting at $f(\tilde{v})$
have distinct labels, the new vertices and edges that we added to
$\tilde{\Gamma}$ are all distinct. Clearly the graph
$\tilde{\Delta}_1$ immerses into $\Gamma$ via the map that preserves
edge labeling. Then apply the same process to $\Delta_1$, adding new
edges as necessary at any incomplete vertices, to form the graph
$\Delta_2$. Continue in this fashion to build a sequence of graphs
$\tilde{\Gamma} \subseteq \tilde{\Delta}_1 \subseteq
\tilde{\Delta}_2 \subseteq ...$ by adding new vertices and edges to
the previous graph at any incomplete vertices. Let $\tilde{\Delta}$
be the union of the graphs $\tilde{\Delta}_i$ as $ i$ ranges from
$1$ to $\infty$. The graph $\tilde{\Delta}$ is obtained from
$\tilde{\Gamma}$ by adding (possibly infinite) trees to incomplete
vertices of $\tilde{\Gamma}$. Then the map $g : \tilde{\Delta}
\rightarrow \Gamma$ that extends $f$ and maps paths in
$\tilde{\Delta} \setminus \tilde{\Gamma}$ to their obvious images in
$\Gamma$ is a covering map since $\tilde{\Delta}$ has no incomplete
vertices.

(b) Suppose now that $\gamma$ is a deck transformation of the
immersion $f : \tilde{\Gamma} \rightarrow \Gamma$ that takes a
vertex $\tilde{v}$ to a vertex $\tilde{v}_1$. Any path $\tilde{p}$
in $\tilde{\Delta}$ starting at $\tilde{v}$ factors in the form
$\tilde{p} = \tilde{p}_1\tilde{q}_1\tilde{p}_2\tilde{q}_2
...\tilde{p}_n\tilde{q}_n$ where
$\tilde{p}_1\tilde{p}_2...\tilde{p}_n$ is a path in $\tilde{\Gamma}$
and the $\tilde{q}_i$ are circuits in the forest $\tilde{\Delta}
\setminus \tilde{\Gamma}$ based at $\omega(\tilde{p}_i)$ for $i =
1,...,n-1$ and $\tilde{q}_n$ is a path in $\tilde{\Delta} \setminus
\tilde{\Gamma}$ starting at $\omega(\tilde{p}_n)$. By Proposition
\ref{graphisomorphisms}, there is a path $\tilde{p}_1' $ in
$\tilde{\Gamma}$ starting at $\tilde{v}_1$ with $\ell(\tilde{p}_1')
= \ell(\tilde{p}_1)$. Also by the same proposition, there is an edge
$\tilde{e}$ in $\tilde{\Gamma}$ starting at $\omega(\tilde{p}_1)$
with label $x$ if and only if there is an edge $\tilde{e}'$ in
$\tilde{\Gamma}$ starting at $\omega(\tilde{p}')$ with the same
label. It follows that there is a path of the form $\tilde{q}_1'$ in
$\tilde{\Delta} \setminus \tilde{\Gamma}$ with $\ell(\tilde{q}_1') =
\ell(\tilde{q}_1)$, $\alpha(\tilde{q}_1') = \omega(\tilde{p}_1')$
and $\tilde{q}_i'$ is a circuit if and only if $\tilde{q}_i$ is a
circuit. Continuing in this fashion, we see that there is a path
$\tilde{p}' = \tilde{p}_1'\tilde{q}_1'\tilde{p}_2'\tilde{q}_2'
...\tilde{p}_n'\tilde{q}_n'$ with $\ell(\tilde{p}') =
\ell(\tilde{p})$. Furthermore, $\tilde{p}'$ is a circuit if and only
if $\tilde{p}$ is a circuit since $\omega(\tilde{p}') =
\gamma(\omega(\tilde{p}))$. Hence by Proposition
\ref{graphisomorphisms}  there is a deck transformation
$\tilde{\gamma}$ of $\tilde{\Delta}$ that takes $\tilde{v}$ to
$\tilde{v}_1$. The bijection $\tilde{\gamma}$ extends $\gamma$ and
maps a path in $\tilde{\Delta} \setminus \tilde{\Gamma}$ starting at
an incomplete vertex $\tilde{w}$ to the path with the same label
starting at $\gamma(\tilde{w})$.
\end{proof}

We remark that it is not clear that it is possible to extend {\em
finite} immersions between graphs to {\em finite} covers in such a
way that deck transformations of the immersion are restrictions of
deck transformations of the cover.

\medskip

\noindent {\bf Acknowledgement.} We are grateful to Nora Szak\'acs
for pointing out an error in an earlier version of this paper.

\medskip

\noindent Department of Mathematics, University of Nebraska,
Lincoln, Nebraska 68588, USA.

\noindent John Meakin   \, \, jmeakin@math.unl.edu;

\noindent  Corbin Groothuis \, \, corbin.groothuis@huskers.unl.edu

\end{document}